\newcommand{\dd}{\,{\rm d}}
\newcommand\R{{\mathbb{R}}}
\newcommand\N{{\mathbb{N}}}
\renewcommand\div{{\rm div}}
\newtheorem{theorem}{Theorem}[section]
\newtheorem{proposition}[theorem]{Proposition}
\theoremstyle{definition}
\theoremstyle{remark}
\newtheorem{remark}[theorem]{Remark}
\numberwithin{equation}{section}
\numberwithin{figure}{section}
\begin{document}

\title[Non-solenoidal approximation of NS]
{On a non-solenoidal approximation to the incompressible Navier-Stokes equations}

\author{Lorenzo Brandolese}

\address{L. Brandolese: Universit\'e de Lyon~; Universit\'e Lyon 1~;
CNRS UMR 5208 Institut Camille Jordan,
43 bd. du 11 novembre,
Villeurbanne Cedex F-69622, France.}
\email{Brandolese{@}math.univ-lyon1.fr}
\urladdr{http://math.univ-lyon1.fr/$\sim$brandolese}

\thanks{Supported by the ANR project DYFICOLTI ANR-13-BS01-0003-01}

\date{\today}

\subjclass[2010]{35Q30, 76D05, 76E19}
\keywords{Asymptotic profiles, long-time asymptotics, artificial compressibility}

\date{\today}

\begin{abstract}
We establish an asymptotic profile that sharply describes the behavior as $t\to\infty$ for solutions 
to a non-solenoidal approximation of the incompressible Navier--Stokes equations introduced by Temam.
The solutions of Temam's model are known to converge to the corresponding solutions of the 
classical Navier--Stokes, e.g., in $L^3_{\rm loc}(\R^+\times\R^3)$, provided $\epsilon\to0$, where
$\epsilon>0$ is the physical parameter related to the artificial compressibility term. 
However, we show that such model is no longer a good approximation of Navier--Stokes for large times: indeed, its solutions can decay much slower as $t\to\infty$ than the corresponding solutions of Navier--Stokes.
%
\end{abstract}

\maketitle

\section{Introduction}
\label{sec:intro}
Motivated by recent studies by 
C. Niche, M.~E. Schonbek \cite{NicS15} and W.~Rusin~\cite{Rus12}, we consider
the following system, proposed by R.~Temam~\cite{Tem68} as an useful model for the effective approximation of solutions to the Navier--Stokes equations:
\begin{equation}
\label{eq:comp-appr}
 \begin{cases}
  \partial_t u^\epsilon -
  \Delta u^\epsilon+u^\epsilon\cdot \nabla u^\epsilon+\frac12 u^\epsilon\div(u^\epsilon)-\frac1\epsilon\nabla \div(u^\epsilon)=0,
 \qquad x\in \R^n, \; t>0 \\
  u(x,0)=u_0(x), \qquad \div(u_0)=0.
 \end{cases}
\end{equation}
Here $\epsilon>0$ is a parameter measuring how much the vector field $u^\epsilon$ is far from being incompressible.
Notice that $\div(u^\epsilon)$, in general, will not be equal to zero for $t>0$. 
This way of approaching a Navier--Stokes flow with vector fields that are not necessarily 
divergence-free has several advantages in numerical simulations, as pointed out in~\cite{Tem68}.
Indeed, using~\eqref{eq:comp-appr} simplifies the discretization procedures, as one does not need to put the divergence-free constraint at each step for the discretized solutions. Another nice feature of this approximation is that it allows to disregard the nonlocal features (the pressure) of the original system.

The construction of global weak solutions to the system~\eqref{eq:comp-appr} and the
convergence problem as $\epsilon\to0$ of these solutions to the corresponding
Leray solutions of the 
classical Navier--Stokes system are successfully addressed in~\cite{Tem68} and more recently in~\cite{Rus12}.

However, the above mentioned papers provide little information on the long time behavior
of solutions of the approximated model.
The asymptotic behavior as $t\to+\infty$ for the problem~\eqref{eq:comp-appr} has been addressed
only recently in~\cite{NicS15}.
Therein, the authors prove, among other things, that if $u_0\in L^2(\R^n)$ and if the solution of the
linearized equation decays to zero in the $L^2$-norm \emph{at slow rates} as $t\to\infty$,
then the solutions to system~\eqref{eq:comp-appr} will behave like the solutions of the heat equation:
this behavior was somehow the expected one because it is in agreement with the asymptotic behavior 
for the standard Navier--Stokes equations.

On the other hand, when the solution of the linearized equation
decays at \emph{faster rates} 
(namely, faster than $\mathcal{O}(t^{-n/4})$ in the $L^2$-norm),
then the analysis of~\cite{NicS15} does not to apply anymore: the reason is explained in Remark~\ref{wrocla} below.
Typically, we encounter such faster decay rates as soon as $u_0\in L^1(\R^n)$. 

For this reason, the problem of the large time behavior of solutions to~\eqref{eq:comp-appr}
arising from $u_0\in L^1(\R^n)\cap L^2(\R^n)$, is essentially open.

One might wonder that when $u_0\in L^1(\R^n)\cap L^2(\R^n)$ the solutions to~\eqref{eq:comp-appr}
could behave quite differently than the solutions of the Navier--Stokes equations as $t\to+\infty$.
Our analysis will prove that this is indeed the case.
One reason for this is that, contrary to the case of the classical Navier--Stokes,
the mean $\int u^\epsilon(\cdot,t)\dd x$
is no longer constant-in-time.
The long time behavior of  $\int u^\epsilon(\cdot,t)\dd x$ is itself an interesting problem to address, as one expects
its limit to play a predominant role in the description of the long-time asymptotics.
These issues are the motivations of the present paper.
The main result in this direction, stated in Section~\ref{sec:fo}, is the asymptotic profile obtained in Theorem~\ref{th:main} and its applications to sharp two-sided decay estimates for the $L^q$-norms.

The main tool for describing the long time behavior of solutions to~\eqref{eq:comp-appr}
will be the construction of asymptotic profiles for solutions of linear integral equations of the form
\begin{equation}
\label{int-mod}
u(t)=M(t)u_0+\int_0^tM(t-s)f(s)\dd s.
\end{equation}
Here $M(t)$ is a convolution operator with a kernel satisfying the same scaling properties as the heat kernel, and $f$ is a given forcing term.
As such, equation~\eqref{int-mod} is a natural generalization of the classical heat equation in the whole
space. The key result for this linear problem is Theorem~\ref{prop:pro-lin}. This theorem is widely applicable to other nonlinear equations, besides~\eqref{eq:comp-appr}.

\section{On a generalized heat equation in $\R^n$}
\label{sec:gen-heat}

Let $M(x,t)\in C^1(\R^n\times(0,+\infty))$ with the same scaling properties as the heat kernel
$E(x,t)$,
namely,
\begin{equation}
 \label{scalingM}
 M(x,t)=t^{-n/2}M(\frac{x}{\sqrt t},1), \qquad x\in\R^n,\;t>0.
\end{equation}
We also assume that $M$ satisfies the uniform-in-time spatial decay estimates
\begin{equation}
\label{decay-x}
|\partial_x^m M(x,t)|\le C_m|x|^{-n-m}, \qquad m=0,1. 
\end{equation}
Estimates~\eqref{decay-x} together with the scaling properties~\eqref{scalingM} imply the uniform-in-space estimates
\begin{equation}
 \label{decay-t}
 |\partial_t^p M(x,t)|\le C_pt^{-(n+2p)/2}, \qquad p=0,1.
\end{equation}
Both decay estimates hold true for the usual heat kernel.
For all $1<q\le\infty$, and $m,p=0,1$ we also deduce
\begin{equation}
 \label{decay-q}
 \| \partial_x^mM(\cdot,t)\|_q=C_{m,q}\,t^{-\frac n2(1-\frac1q)-m/2}, \qquad
 \| \partial_t^pM(\cdot,t)\|_q=C_{p,q}\,t^{-\frac n2(1-\frac1q)-p}.
\end{equation}

Next theorem describes the long time behavior of solutions to~\eqref{int-mod} starting from vanishing initial data.

\begin{theorem}\mbox{}\\
\label{prop:pro-lin}
\begin{enumerate}
\item
\label{item:part1}
Let $n\ge1$,  $f\in L^1(\R^n\times\R^+)$, with\,  $\|f(t)\|_1=\mathcal{O}(\frac1t)$ as $t\to+\infty$.
Let us introduce the constant 
$ \lambda=\int_0^\infty\!\!\int f(y,s)\dd y\dd s$
and let also
\begin{equation*}
 \label{integralphi}
 \Phi(x,t)=\int_0^t\!\!\int M(x-y,t-s)f(y,s)\dd y\dd s.
\end{equation*}
Then, as $t\to+\infty$,
\begin{equation}
\label{pro:Phi}
 \biggl\| \Phi(t) - \lambda  M(\cdot,t)  \,\biggr\|_q= o(t^{-\frac n2(1-\frac1q)}), 
 \qquad \text{with}\quad
  \begin{cases} 
    1<q\le\infty &\text{if $n=1$},\\
    1< q<\textstyle\frac{n}{n-2} &\text{if $n\ge2$}.
  \end{cases}
\end{equation}
In particular, when $\lambda\not=0$, there exist two constants $c_q,c_q'>0$, independent on~$f$, such that, for~$t>\!\!\!>1$,
\begin{equation}
\label{bound:Phi}
 \lambda c_q \, t^{-\frac n2(1-\frac1q)} \le \|\Phi(t)\|_q \le \lambda c'_q\, t^{-\frac n2(1-\frac1q)}.
\end{equation}
\item
\label{item:part2}
The above results~\eqref{pro:Phi}-\eqref{bound:Phi} extend to $q=1$ if we have in addition $M(\cdot,1)\in L^1(\R^n)$, and to
$\frac{n}{n-2}\le q\le \infty$, provided 
$\|f(t)\|_\beta =\mathcal{O}(t^{-(1+\frac n2(1-\frac1\beta))})$ as $t\to+\infty$, for some $\beta$ such that 
$\frac1q\le \frac{1}{\beta}<\frac1q+\frac2n$.
\end{enumerate}
\end{theorem}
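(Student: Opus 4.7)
My plan is to establish the asymptotic profile via the decomposition
\begin{equation*}
\Phi(x,t) - \lambda M(x,t) = \int_0^{t}\!\Bigl([M(\cdot,t-s)*f(\cdot,s)](x) - \mu(s)M(x,t)\Bigr)ds \;-\; M(x,t)\!\int_t^{\infty}\!\!\mu(s)\,ds,
\end{equation*}
where $\mu(s):=\int_{\R^n} f(y,s)\,dy$ and $*$ denotes spatial convolution. Splitting the first integral at $s=t/2$ yields three pieces $I_1$ (for $s\le t/2$), $I_2$ (for $s\ge t/2$), and the tail $I_3$; setting $a:=\tfrac{n}{2}(1-\tfrac{1}{q})$, my goal is $\|I_j\|_q=o(t^{-a})$ for $j=1,2,3$. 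The tail is immediate: since $f\in L^1(\R^n\times\R^+)$ one has $\int_t^{\infty}|\mu(s)|\,ds\to 0$, and~\eqref{decay-q} gives $\|I_3\|_q \le \|M(\cdot,t)\|_q\!\int_t^{\infty}|\mu(s)|\,ds = o(t^{-a})$.

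For $I_1$, the plan is dominated convergence applied to
\begin{equation*}
\|[M(\cdot,t-s)*f(\cdot,s)] - \mu(s)M(\cdot,t)\|_q \;\le\; \int\|M(\cdot-y,t-s)-M(\cdot,t)\|_q\,|f(y,s)|\,dy.
\end{equation*}
Writing $M(\cdot-y,t-s)-M(\cdot,t) = [M(\cdot-y,t-s)-M(\cdot,t-s)] + [M(\cdot,t-s)-M(\cdot,t)]$: the scaling~\eqref{scalingM} turns the first bracket into $(t-s)^{-a}\|M(\cdot-y/\sqrt{t-s},1)-M(\cdot,1)\|_q$, which is $o(t^{-a})$ for fixed $y$ by $L^q$-continuity of translations (the case $q=\infty$ uses uniform continuity of $M(\cdot,1)$, which follows from~\eqref{decay-x} and the $C^1$ hypothesis). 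The mean value theorem in time together with~\eqref{decay-q} yields for the second bracket $\|M(\cdot,t-s)-M(\cdot,t)\|_q \le Cs\,t^{-a-1} = o(t^{-a})$ for fixed $s$. The uniform domination $\|M(\cdot-y,t-s)-M(\cdot,t)\|_q\le Ct^{-a}$ on $s\in[0,t/2]$ then legitimizes dominated convergence against $|f|\in L^1(\R^n\times\R^+)$, giving $\|I_1\|_q=o(t^{-a})$.

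The main obstacle is $I_2$, where $t-s$ may be small and a direct application of Young's inequality yields only $\mathcal{O}(t^{-a})$. My plan is to introduce an auxiliary $\delta\in(0,1/2)$ and further split $[t/2,t]=[t/2,(1-\delta)t]\cup[(1-\delta)t,t]$. On the first subinterval, $t-s\ge\delta t$, so Young's inequality produces
\begin{equation*}
\Bigl\|\int_{t/2}^{(1-\delta)t}\!\!M(\cdot,t-s)*f(\cdot,s)\,ds\Bigr\|_q \le C(\delta t)^{-a}\!\int_{t/2}^{\infty}\!\|f(s)\|_1\,ds = o_\delta(t^{-a}).
\end{equation*}
On the second subinterval, $\|f(s)\|_1\le K/s\le 2K/t$, and the condition $q<n/(n-2)$ (equivalent to $a<1$) is precisely what makes $\int_{(1-\delta)t}^t(t-s)^{-a}\,ds=\tfrac{(\delta t)^{1-a}}{1-a}$ finite, bounding the contribution by $C\delta^{1-a}t^{-a}$. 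The subtracted piece $\int_{t/2}^t\mu(s)M(\cdot,t)\,ds = M(\cdot,t)\!\int_{t/2}^t\mu(s)\,ds$ needs no splitting: it is $o(t^{-a})$ directly via $\int_{t/2}^{\infty}|\mu(s)|\,ds\to 0$. Thus $\limsup_{t\to\infty}t^a\|I_2\|_q \le C\delta^{1-a}$ for every $\delta>0$, and sending $\delta\to 0$ closes the argument. This gives~\eqref{pro:Phi}, and the two-sided bound~\eqref{bound:Phi} follows from the triangle inequality using $\|\lambda M(\cdot,t)\|_q = |\lambda|C_q\,t^{-a}$ with $C_q>0$ from~\eqref{decay-q}.

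Part~\ref{item:part2} requires only minor modifications. For $q=1$, the assumption $M(\cdot,1)\in L^1$ makes $\|M(\cdot,t)\|_1$ constant (so $a=0$) and the same three-piece argument applies verbatim. For $\tfrac{n}{n-2}\le q\le\infty$, Young's inequality is used in the form $\|M(\cdot,t-s)*f(\cdot,s)\|_q \le \|M(\cdot,t-s)\|_r\|f(s)\|_\beta$ with $\tfrac{1}{r}=1+\tfrac{1}{q}-\tfrac{1}{\beta}$; the integrability exponent becomes $\tfrac{n}{2}(1-\tfrac{1}{r}) = \tfrac{n}{2}(\tfrac{1}{\beta}-\tfrac{1}{q})$, which is strictly less than $1$ exactly under the stated condition $\tfrac{1}{\beta}<\tfrac{1}{q}+\tfrac{2}{n}$, and the sharper decay $\|f(s)\|_\beta=\mathcal{O}(s^{-1-\frac{n}{2}(1-1/\beta)})$ is exactly the rate needed to run the $I_2$ argument in this exponent range.
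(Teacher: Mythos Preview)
Your proof is correct and follows essentially the same approach as the paper: both arguments split the time integral near the endpoint using an auxiliary small parameter (your $\delta$, the paper's $\eta$ via the split point $a_\eta t$), handle the bulk by dominated convergence exploiting the scaling~\eqref{scalingM} and the $L^q$-continuity of translations together with the mean-value estimate in time, and control the near-endpoint piece via Young's inequality, the hypothesis $\|f(t)\|_1=\mathcal{O}(1/t)$, and the integrability of $(t-s)^{-a}$ granted by $a<1$. The only differences are organizational---you first split at $t/2$ and then at $(1-\delta)t$, and merge the time-variation and spatial-translation pieces into a single dominated-convergence step, whereas the paper splits once at $a_\eta t$ and separates these two pieces---but the underlying mechanism is identical.
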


The above asymptotic expansion, with $E$ instead of~$M$, provides an asymptotic profile for the solution of the heat equation with forcing $\partial_t u=\Delta u +f$ in~$\R^n$, and zero initial data. In this case,
the result is valid also for $q=1$.

\begin{proof}[Proof of Theorem~\ref{prop:pro-lin}]
We consider the following decomposition, for any $\eta>0$ and a suitable $1/2<a_\eta<1$ to be chosen later.
 \[
 \begin{split}
  \Phi(t)-&\lambda M(\cdot,t)\\
  \qquad
  &=\int_0^t\!\!\int M(x-y,t-s)f(y,s)\dd y\dd s-  M(\cdot,t) \int_0^\infty\!\!\!\int f(y,s)\dd y \dd s\\
  &=I_1+I_2+I_3+I_4,
 \end{split}
 \]
where
\[
\begin{split}
 I_1&=-M(x,t)\int_{a_\eta t}^\infty\!\int f(y,s)\dd y \dd s,\\
 I_2&=\int_0^{a_\eta t}\!\!\int [M(x,t-s)-M(x,t)]f(y,s)\dd y\dd s,\\
 I_3&=\int_0^{a_\eta t}\!\!\int[M(x-y,t-s)-M(x,t-s)]f(y,s)\dd y \dd s,\qquad\text{and}\\
 I_4&=\int_{a_\eta t}^t\!\!M(x-y,t-s)f(y,s)\dd y\dd s.
 \end{split}
\]
Now,
\[
 \|I_1(t)\|_q\le C_q\,t^{-n/2(1-1/q)}\int_{a_\eta t}^\infty\!\int |f(y,s)|\dd y \dd s=o(t^{-\frac n2(1-\frac1q)}),
 \qquad \text{as $t\to+\infty$}.
\]
by the assumption $f\in L^1(\R^+\times\R^n)$ and the dominated convergence theorem.

For the estimate of $I_2$ we make use of the Taylor formula
\[
 M(x,t-s)-M(x,s)=-\int_0^1\partial_tM(x,t-\theta\,s)\cdot s\dd \theta.
\]
For all $0\le s\le a_\eta t$, using estimate~\eqref{decay-q} we get
\[
\begin{split}
 \|M(\cdot, t-s)- M(\cdot,s)\|_q 
 &\le C_{1,q}(t-\theta s)^{-\frac n2(1-\frac1q)-1} s\\
 &\le c_{\eta,q}\, C_{1,q}t^{-\frac n2(1-\frac1q)-1} s.
\end{split}
 \]
Then we get
\[
 \|I_2(t)\|_q\le c_{\eta,q}\, C_{1,q}t^{-\frac n2(1-\frac1q)-1} \int_0^{a_\eta t}s\|f(\cdot,s)\|_1\dd s.
\]
But $\int_0^{a_\eta t}s\|f(\cdot,s)\|_1\dd s=o(t)$ as $t\to\infty$, as it can be checked by splitting the last integral, {\it e.g.\/},
into $\int_0^{\sqrt{a_\eta t}}$ and $\int_{\sqrt{a_\eta t}}^{a_\eta t}$ and then applying the dominated convergence theorem (we only need $f\in L^1(\R^n\times\R^+)$ here).
We thus get 
\[
 \|I_2\|_q=o(t^{-\frac n2(1-\frac1q)}),
 \qquad \text{as $t\to+\infty$}.
\]

In order to estimate the third integral  we make use of the scaling properties~\eqref{scalingM} of~$M$. 
For all $1<q< \infty$ we have
\[
 \begin{split}
  \|I_3(t)\|_q
   &\le \int_0^{a_\eta t}\!\!\int (t-s)^{-n/2}
      \bigl\| M(\textstyle\frac{\cdot\,-y}{\sqrt{t-s}},1)-M(\textstyle\frac{\cdot}{\sqrt{t-s}},1) \bigr\|_q \,|f(y,s)|\dd y\dd s\\
  &\le  c_{\eta,q}\, t^{-\textstyle\frac n2(1-\frac1q)}   \int_0^\infty\!\!\int 
      {\bf 1}_{[0,a_\eta t]}(s)\bigl\| M(\cdot-\textstyle\frac{y}{\sqrt{t-s}},1)-M(\cdot,1)\bigr\|_q \,|f(y,s)|\dd y\dd s,\\
  \end{split}
  \]
where ${\bf 1}_S$ denotes the indicator function of the set~$S$.
The integrand is dominated by the integrable function $2\|M(\cdot,1)\|_q\,|f(y,s)|$. Moreover, for a.e. $(y,s)\in \R^n\times(0,a_\eta t)$,
by the continuity under translations of the $L^q$-norm
(or, when $q=\infty$, by the fact that $M(\cdot,t)$ is uniformly continuous), we have
\[
   {\bf 1}_{[0,a_\eta t]}(s)\bigl\| M(\cdot-\textstyle\frac{y}{\sqrt{t-s}},1)-M(\cdot,1)\bigr\|_q \,f(y,s)\to0, \qquad\text{as $t\to\infty$}.
\]
The dominated convergence theorem then yields 
\[
 \|I_3\|_q=o(t^{-\frac n2(1-\frac1q)}),
 \qquad \text{as $t\to+\infty$}.
\]

Let us now consider $I_4$. 
Applying the Young inequality
we have, for $1\le \alpha,\beta,q\le\infty$,
\begin{equation}
\label{eq:youn}
 \|I_4(t)\|_q\le \int_{a_\eta t}^t \|M(t-s)\|_\alpha \|f(\cdot,s)\|_\beta\dd s, \qquad \textstyle1+\frac1q=\frac1\alpha+\frac1\beta.
\end{equation}

Let us first consider the cases $n=1$, or $n\ge2$ and $1\le q<\frac{n}{n-2}$.
This ensures $(n/2)(1-1/q)<1$.
For any $\eta>0$ small enough, take $a_\eta=1-\eta^{1/(1-(n/2)(1-1/q))}$, in a such way that $1/2<a_\eta<1$.
In these cases we apply the above estimate with $\alpha=q$ and $\beta=1$.
We now make use of the assumption $\|f(t)\|_1=\mathcal{O}(\frac1t)$ to deduce the estimate, for large enough~$t$,
\[
\begin{split}
 \|I_4\|_q  
 &\le C_{q} \int_{a_\eta t}^t (t-s)^{-\frac n2(1-\frac1q)}s^{-1} \dd s\\
 &\le C_q\, t^{-1} \int_{a_\eta t}^t (t-s)^{-\frac n2(1-\frac1q)}\dd s\\
 &=C_q\,\eta\, t^{-\frac n2(1-\frac1q)}.
\end{split}
 \]
As $\eta>0$ is arbitrarily small, the conclusion follows in this case.

When $n\ge 2$ and $\frac{n}{n-2}\le q\le\infty$, we need to take in estimate~\eqref{eq:youn}
$1-\frac2n<\frac{1}{\alpha}\le1$, and so $\frac1q\le\frac{1}{\beta}<\frac1q+\frac2n$.
We consider again an arbitrary small $\eta>0$, but we take now
$a_\eta=1-\eta^{1/(1-(n/2)(1-1/\alpha))}$.
The additional assumption 
$\|f(t)\|_\beta=\mathcal{O}(t^{-(1+\frac n2(1-\frac1\beta))})$ yields the estimate, for large enough~$t$,
\[
\begin{split}
 \|I_4\|_q  
 &\le C_{q} \int_{a_\eta t}^t (t-s)^{-\frac n2(1-\frac1\alpha)}s^{-1-\frac{n}{2}(1-\frac1\beta)} \dd s\\
 &\le C_q\, t^{-1-\frac{n}{2}(1-\frac1\beta)} \int_{a_\eta t}^t (t-s)^{-\frac n2(1-\frac1\alpha)}\dd s\\
 &=C_q\,\eta\, t^{-\frac n2(1-\frac1q)}.
\end{split}
 \]
Hence, the conclusion follows also in this case.
\end{proof}

Calculations in the same spirit as the above were done in~\cite{FujM01}, in the particular 
case of the heat kernel. In fact, in \cite{FujM01} the computations were carried for the Navier--Stokes equations.

Theorem~\ref{prop:pro-lin} is the crucial step for establishing Theorem~\ref{th:main} below.
But the former theorem should be of independent interest, as the scaling methods used for obtaining asymptotic profiles can be applied to other nonlinear problems.

\section{Statement of the main result}
\label{sec:fo}

A weak solution to problem~\eqref{eq:comp-appr} is a vector field 
$u^\epsilon\in L^\infty(0,\infty),L^2(\R^n))\cap L^2((0,\infty), \dot H^1(\R^n))$ such 
that for every $\phi\in \mathcal{D}(\R^n,[0,\infty))$, $\div \phi=0$, we have
\[
\int_0^\infty\!\!\!\int 
\Bigl(\nabla u^\epsilon\cdot \nabla\phi + (u^\epsilon\cdot\nabla u^\epsilon)\cdot \phi
+\frac12 u^\epsilon\cdot \phi\div u^\epsilon-u^\epsilon\cdot\partial_t\phi
\Bigr)\dd x\dd t
=\int u_0\cdot \phi(\cdot,0)\dd x
\]
If $u_0\in L^2(\R^n)$, and $\div u_0=0$, then the existence (and the uniqueness for $n=2$) of a weak
solution to problem~\eqref{eq:comp-appr} was established in~\cites{Rus12, Tem68}. As for the classical
Navier--Stokes equations such solution satisfies the  energy inequality
\begin{equation}
\label{strong:ei}
\int |u^\epsilon(x,t)|^2\dd x+ 2\int_s^t\!\!\!\int \Bigl( |\nabla u^\epsilon(x,r)|^2
+\frac1\epsilon|\div u^\epsilon(x,r)|^2  \Bigr)\dd x\dd r
\le \int|u^\epsilon(x,s)|^2\dd x,
\end{equation}
for $s=0$ and all $t\ge 0$. At least for $2\le n\le 4$, one can also prove the validity of
the so-called \emph{strong energy inequality}, that is the validity of~\eqref{strong:ei} for almost $s>0$  and all $t\ge s$.

In the following, we denote the heat kernel by 
\[
E(x,t)=e^{-|x|^2/(4t)}/(4\pi t)^{n/2}.
\]

The usual $L^q$-estimates for $E$ are 
\[
\|E(\cdot,t)\|_q=c_q t^{-(n/2)(1-1/q)}, \qquad 1\le q\le\infty.
\]
Our main result reads as follows:

\begin{theorem}\mbox{}\nobreak
\label{th:main}
Let $2\le n\le 4$, $\epsilon>0$ and $u_0\in L^1\cap L^2(\R^n)$ be a divergence-free vector field.
Let $u^\epsilon$ be a weak solution to~\eqref{eq:comp-appr}, satisfying the strong energy inequality~\eqref{strong:ei}.
Then, $u^\epsilon$ becomes eventually a strong solution and the limit 
\begin{equation}
\label{liminte}
\vec\lambda_\epsilon\equiv\lim_{t\to+\infty}\int u^\epsilon(x,t)\dd x
\end{equation}
does exist and is finite.
The vector $\vec\lambda_\epsilon$ describes the long time behavior of~$u^\epsilon$ in the following sense:
\begin{equation}
\label{eq:pro-u}
 \biggl\| u^\epsilon(\cdot,t) +  E(\cdot,t)\vec\lambda_\epsilon  \,\biggr\|_q= o(t^{-\frac n2(1-\frac1q)}), 
 \qquad \text{with}\quad
    1\le q\le \infty.
\end{equation}
Moreover, if $\vec\lambda_\epsilon\not=0$, then there exist $c_\epsilon,c'_\epsilon>0$ such that,
for $1\le q\le\infty$,
\begin{equation}
 \label{eq:ul-bounds-u}
 c_\epsilon |\vec \lambda_\epsilon| \,t^{-\frac n2(1-\frac1q)}
 \le \|u^\epsilon(t)\|_q
 \le c'_\epsilon|\vec\lambda_\epsilon| \, t^{-\frac n2(1-\frac1q)}, \qquad\text{for $t>\!\!\!>1$}.
\end{equation}

The above conclusions holds in any dimension $n\ge2$ if $u_0$ belongs to $L^1\cap L^n(\R^n)$
with $\|u_0\|_n$ small enough and $u^\epsilon$ the unique global strong solution to~\eqref{eq:comp-appr}.
The smallness condition on $\|u_0\|_n$ can be replaced by the weaker assumption that 
$\|u_0\|_{\dot B^{-1+n/p}_{p,\infty}}$ is small enough, for some $n<p<2n$.
\end{theorem}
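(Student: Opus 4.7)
The plan is to reduce Theorem~\ref{th:main} to the abstract profile result of Theorem~\ref{prop:pro-lin} applied to the Duhamel formulation of~\eqref{eq:comp-appr}. First I identify the relevant linear semigroup. Splitting any vector field via the Helmholtz projectors $\mathbb{P}$ and $\mathbb{Q}=I-\mathbb{P}$, the operator $\Delta+\tfrac{1}{\epsilon}\nabla\div$ acts as $\Delta$ on $\mathbb{P}u^\epsilon$ and as $(1+\tfrac{1}{\epsilon})\Delta$ on $\mathbb{Q}u^\epsilon$, so that the corresponding semigroup is
\[
M^\epsilon(t)=e^{t\Delta}\mathbb{P}+e^{t(1+1/\epsilon)\Delta}\mathbb{Q}.
\]
Its convolution kernel is matrix-valued, with components equal to Riesz transforms of Gaussians; a direct computation shows that these components satisfy the scaling~\eqref{scalingM} and pointwise decay~\eqref{decay-x}. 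Because $u_0$ is divergence-free, $M^\epsilon(t)u_0=e^{t\Delta}u_0$, and the classical heat expansion yields $\|e^{t\Delta}u_0-(\int u_0)E(\cdot,t)\|_q=o(t^{-(n/2)(1-1/q)})$ for $1\le q\le\infty$.

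The second step is to extract decay of $u^\epsilon$ from the strong energy inequality~\eqref{strong:ei} sufficient to feed Theorem~\ref{prop:pro-lin}. The Fourier-splitting method of Schonbek, carefully adapted to the degenerate dissipation (only $\tfrac{1}{\epsilon}\|\div u^\epsilon\|_2^2$ complements $\|\nabla u^\epsilon\|_2^2$ in~\eqref{strong:ei}), gives $\|u^\epsilon(t)\|_2=\mathcal{O}(t^{-n/4})$ for $u_0\in L^1\cap L^2$; a bootstrap on the integral equation then propagates this to $\|u^\epsilon(t)\|_q=\mathcal{O}(t^{-(n/2)(1-1/q)})$ and $\|\nabla u^\epsilon(t)\|_q+\|\div u^\epsilon(t)\|_q=\mathcal{O}(t^{-(n/2)(1-1/q)-1/2})$ for $q\in[2,\infty]$. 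In particular $\|u^\epsilon(t)\|_n\to 0$, so $u^\epsilon$ eventually enters the small-data regime and becomes a strong solution; under the alternative hypothesis of small $\|u_0\|_n$ or $\|u_0\|_{\dot B^{-1+n/p}_{p,\infty}}$, the classical contraction argument in the corresponding Banach space directly provides a global strong solution with the same decay. Integrating~\eqref{eq:comp-appr} over $\R^n$ and using $\int u\cdot\nabla u\,dx=-\int u\,\div u\,dx$, I obtain the balance law
\[
\frac{d}{dt}\int u^\epsilon(x,t)\,dx=\frac12\int u^\epsilon\,\div u^\epsilon\,dx,
\]
whose right-hand side lies in $L^1_t$ by Cauchy--Schwarz combined with the above decay and $\int_0^\infty\|\div u^\epsilon\|_2^2\,dt<\infty$. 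This yields the existence of $\vec\lambda_\epsilon$, and the same estimates give $\|\mathcal{N}(u^\epsilon)(t)\|_\beta=\mathcal{O}(t^{-1-(n/2)(1-1/\beta)})$ for every $\beta\in[1,\infty]$, verifying both parts of the hypothesis of Theorem~\ref{prop:pro-lin}.

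Finally, writing $u^\epsilon(t)=e^{t\Delta}u_0-\int_0^t M^\epsilon(t-s)\mathcal{N}(u^\epsilon)(s)\,ds$ and applying Theorem~\ref{prop:pro-lin} componentwise (using the matrix entries of $M^\epsilon$ as scalar kernels and the components of $\mathcal{N}$ as scalar forcings) produces the asymptotic profile of the nonlinear integral. Integrating the balance law in time gives the identity $\vec\lambda_\epsilon=\int u_0-\int_0^\infty\!\int\mathcal{N}$, which combined with the heat-kernel expansion of $e^{t\Delta}u_0$ recovers~\eqref{eq:pro-u}; the sharp two-sided bound~\eqref{eq:ul-bounds-u} is then an immediate consequence of~\eqref{bound:Phi} applied to the leading profile when $\vec\lambda_\epsilon\neq 0$. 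The main obstacle is the decay analysis of the second step: the partial dissipation in~\eqref{strong:ei} makes Schonbek's splitting substantially more delicate than in the classical Navier--Stokes case, and the borderline integrability $\|\mathcal{N}(t)\|_1=\mathcal{O}(1/t)$ required by Theorem~\ref{prop:pro-lin} has to be met precisely---this is what forces the dimensional restriction $2\le n\le 4$ when only $L^1\cap L^2$-information on $u_0$ is assumed, and what motivates the alternative smallness hypotheses in higher dimensions.
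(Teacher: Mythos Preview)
Your overall architecture—Duhamel formulation with the semigroup $M_\epsilon(t)=e^{t\Delta}\mathbb P+e^{t(1+1/\epsilon)\Delta}\mathbb Q$, decay estimates on $u^\epsilon$, then Theorem~\ref{prop:pro-lin}—matches the paper's. But the decay step, which you flag as the heart of the matter, is handled quite differently in the paper and your account of it contains real misunderstandings.

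First, the dissipation in~\eqref{strong:ei} is \emph{not} degenerate or partial: the full term $\|\nabla u^\epsilon\|_2^2$ is present, and $\tfrac1\epsilon\|\div u^\epsilon\|_2^2$ is an \emph{extra} positive contribution. So Fourier splitting here is no harder than for the classical Navier--Stokes. More importantly, the paper does not use Fourier splitting at all. It argues directly from the strong energy inequality that $\nabla u^\epsilon\in L^2((0,\infty),L^2)$, hence for a.e.\ $t_0$ one has $u^\epsilon(t_0)\in H^1(\R^n)$; by Gagliardo--Nirenberg/Sobolev, $H^1(\R^n)\hookrightarrow L^n(\R^n)$ precisely when $2\le n\le 4$, and one can pick $t_0$ so that $\|u^\epsilon(t_0)\|_n$ is below the small-data threshold. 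From that moment on, the Kato-type well-posedness theory (Proposition~\ref{prop:l1l3}) together with weak--strong uniqueness yields a global strong solution with the full family of decay rates
\[
\|u^\epsilon(t)\|_q\lesssim t^{-\frac n2(1-\frac1q)},\qquad
\|\nabla u^\epsilon(t)\|_q\lesssim t^{-\frac12-\frac n2(1-\frac1q)},\qquad 1\le q\le\infty,\ t\ge t_0.
\]
Thus the restriction $2\le n\le 4$ comes from the embedding $H^1\subset L^n$ used to enter the small-data regime, not from any subtlety of the splitting argument or from the borderline $\|\mathcal N(t)\|_1=\mathcal O(1/t)$ bound (which in fact holds with room to spare once the above decay is in hand). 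Your ``bootstrap on the integral equation'' is essentially this small-data theory, but as you present it the logic is circular: you invoke the bootstrapped $L^q$-decay to conclude $\|u^\epsilon(t)\|_n\to0$, yet the bootstrap itself presupposes smallness in a critical norm.

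A second, smaller point: the matrix entries of $M_\epsilon$ are \emph{not} merely Riesz transforms of Gaussians—those individual pieces decay only like $|x|^{-n}$ and fail to lie in $L^1$. The paper observes that the combination is smooth at $\xi=0$ (writing the symbol via $H(r)=(e^{-r}-1)/r$), so $M_\epsilon(\cdot,t)$ is actually in the Schwartz class; this is what makes the $q=1$ case of Theorem~\ref{prop:pro-lin} applicable and lets one speak of $\int u^\epsilon(x,t)\,dx$ at all.
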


Let us stress the fact that, in general, $\vec\lambda_\epsilon\not=0$ for $\epsilon>0$.
We shall construct in Section~\ref{sub-sec:ex} an explicit example of initial data in the Schwartz class $\mathcal{S}(\R^3)$ such that  
$\vec\lambda_\epsilon\not=(0,0,0)$.
In particular, this means that these solutions of~\eqref{eq:comp-appr} will satisfy 
$\|u^\epsilon(t)\|_q\sim t^{-\frac n2(1-\frac1q)}$
for $\epsilon>0$, when $u_0$ is integrable.
This constrasts with the case $\epsilon=0$ of
the Navier--Stokes equations: indeed, solutions of the Navier--Stokes equations are known  to decay 
as $\|u(t)\|_2\sim t^{-(n+2)/4}$ as soon as $u_0$ is well localized, see \cite{Wie87}, and sometimes
even at faster rates (e.g., under appropriate symmetries). See contribution~\cite{BraS16}
for an up-to-date review of decay issues for the Navier-Stokes flows.


\begin{remark}
\label{wrocla}
Theorem~\ref{th:main} corrects one of the results of the paper~\cite{NicS15}.
Therein, the authors develop the theory of ``decay characters'' and give several applications
of this notion. One application concerns equation~\eqref{eq:comp-appr}.
The authors asserted that, when $n=3$,
$\|u^\epsilon(t)\|_2\le C(1+t)^{-5/4}$, under the assumptions that $u_0\in L^2(\R^3)$ 
is divergence-free and satisfies the moment condition
$\int(1+|x|)|u_0(x)|\dd x<\infty$ (in~\cite{NicS15} this moment condition is in fact replaced by the closely related condition $r^*(u_0)\ge1$, where $r^*(u_0)$ is
the so-called ``decay character'' of $u_0$).
This assertion is in contradiction with our lower bounds in~\eqref{eq:ul-bounds-u} that shows,
for $n=3$ and $q=2$ that in general $\|u(t)\|_2\sim t^{-3/4}$ for large~$t$.
It is possible to fix the proof of \cite{NicS15}*{Theorem~3.9}
(the pointwise inequality therein for $G(\xi,t)$ and the subsequent calculations could be easily 
corrected), but 
at the price of obtaining a weaker result, namely a lower decay rate in the upper-bound estimate.
This could be done using the same ideas as in~\cite{NicS15}, based on the Fourier-splitting technique.
In the present paper we follow however a different approach (somehow inspired by~\cite{MiyS01}) that has the advantage providing an exact asymptotic profile.
\end{remark}

\section{Global strong solutions uniformly integrable in time}
\label{sec:uit}

\subsection{The linearized equation}
The associated linear problem to Eq.~\eqref{eq:comp-appr} is
\begin{equation}
 \begin{cases}
 \partial_t u^\epsilon -
  \Delta u^\epsilon-\frac1\epsilon\nabla \div(u^\epsilon)=g\\
 u(x,0)=u_0(x). 
\end{cases}
\end{equation}
The integral formulation associated with this linear problem reads
\begin{equation}
u^\epsilon(x,t)=M_\epsilon(t)u_0(x)+\int_0^t M_\epsilon(t-s)g(s)\dd s,
\end{equation}
where $M_\epsilon(t)u_0(x)$ is given by the convolution integral
\[
 M_\epsilon(t)u_0(x)=\int M_\epsilon(x-y,t)u_0(y)\dd y.
\]
The properties of the kernel~$M_\epsilon(x,t)$ have been studied in detail by W.~Rusin in~\cite{Rus12}.
Its symbol is
\begin{equation}
\label{symbM}
 (\widehat M_\epsilon(\xi,t))_{k,l}
 =e^{-t|\xi|^2}\biggl(\delta_{k,l}-\frac{\xi_k\xi_l}{|\xi|^2}\bigl(1-e^{-t|\xi|^2/\epsilon}\bigr)\biggr).
\end{equation}

We complement here the analysis in~\cite{Rus12} of the kernel~$M_\epsilon(t)$ by observing that,
for $t,\epsilon>0$, the kernel $M_\epsilon(\cdot,t)$ belongs to the Schwartz class.
Indeed, we can write
\[
(\widehat M_\epsilon(\xi,t))_{k,l}=e^{-t|\xi|^2}(\delta_{k,l}+\textstyle\frac{t}{\epsilon}\,
\xi_k\xi_l\, H(-t|\xi|^2/\epsilon)),
\]
where $H(r)=(e^{-r}-1)/r=\sum_{k=0}^\infty (-r)^k/(k+1)!$.
This expressions shows that the map $\xi\mapsto (\widehat M_\epsilon(\xi,t))_{k,l}$ can be extended
smoothly in a neighborhood of $\xi=0$, and it is simple to check that its derivatives of any order
decay exponentially to zero as $|\xi|\to\infty$. 

We also observe that
\begin{equation}
\div u=0\quad\Rightarrow\quad
M_\epsilon(t) u=e^{t\Delta}u.
\label{ideM1}
\end{equation}
Another useful identity is the following, valid for any vector field~$f$ 
(not necessarily divergence-free):
\begin{equation}
\label{ideM2}
\div(M_\epsilon(t)f)=e^{t(1+1/\epsilon)\Delta}(\div f).
\end{equation}

It readily follows from the above expressions of the symbol that, for all $\epsilon>0$,
\[ M_\epsilon(x,t) = t^{-n/2}M_\epsilon(x/\sqrt t ,1).\]
Moreover, from these scaling properties and the fact that $M(\cdot,t)$ belongs to the Schwartz class, it follows that, for $t>0$,
\[
 |\partial_x^m M_\epsilon(x,t)| \le C_m|x|^{-n-m}, \qquad m\in \N,
\]
where $C_m>0$ is independent on $x$ and $t$.
Therefore, $M_\epsilon$ satisfies all the estimates~\eqref{scalingM}--\eqref{decay-q}, including 
for $q=1$.

\subsection{The iterative scheme}

The problem~\eqref{eq:comp-appr} can now be conveniently reformulated in the following
integral form:
\begin{equation}
\label{abstr:eq}
u^\epsilon(t)=u_1(t)+B(u^\epsilon,u^\epsilon), 
\qquad u_1(t)=M_\epsilon(t)u_0,\qquad
\qquad \div(u_0)=0.
\end{equation}

Here $B$ is the bilinear operator
\begin{equation}
\label{eq:210}
B(u,v)=-\int_0^t M_\epsilon(t-s)[u\cdot \nabla v+\frac12 u\,\div{v}](s)\dd s.
\end{equation}

The considerations below apply to any abstract equation of the form $u=u_1+B(u,u)$:
let $\mathcal{F}$ be a Banach space, $u_1\in\mathcal{F}$ and let
$B\colon\mathcal{F}\times\mathcal{F}\to\mathcal{F}$ be a continuous bilinear operator,
with operator norm~$\|B\|$.
Let us introduce the nonlinear operators $T_k\colon\mathcal{F}\to\mathcal{F}$, $k=1,2\ldots$, defined
by induction through the formulae
\begin{equation*}
 \label{indu}
\begin{split}
 &T_1={\rm Id}_{\mathcal{F}}\\
& T_k(v)\equiv\sum_{l=1}^{k-1}B(T_l(v),T_{k-l}(v)), \quad k\ge2.
\end{split}
\end{equation*}
Notice that the operator $T_k$ is 
the restriction to the diagonal of a $k$-multilinear operator
defined from $\mathcal{F}^k= \mathcal{F}\times\cdots\times \mathcal{F}$ to $\mathcal{F}$.
The following estimate holds true, see~\cite{AuscT}, \cite{Lem02}:
\begin{equation}
 \label{ATE}
\|T_k(u_1)\|_{\mathcal{F}}\le \frac{C}{\|B\|}k^{-3/2}\bigl(4\,\|B\|\,\|u_1\|_{\mathcal{F}}\bigr)^k.
\end{equation}
Thus, under the smallness assumption
\begin{equation}
 \label{absmall}
\|u_1\|_{\mathcal{F}}\le 1/(4\|B\|),
\end{equation}
the series
\begin{equation}
\label{snc}
\Psi(u_1)\equiv\sum_{k=1}^\infty T_k( u_1 ),
\end{equation}
is absolutely convergent in~$\mathcal{F}$
and its sum $\Psi(u_1)$ is a solution
of the equation
$ u=u_1+B(u,u)$.
Furthermore, $\Psi(u_1)$ is the only solution in the closed ball
$\overline{B_{\mathcal{F}}}(0,\frac{1}{2\|B\|})$ (see~\cite{AuscT}, \cite{Lem02}).

\medskip

Coming back to our model~\eqref{eq:comp-appr}, recalling $u_1(t)=M_\epsilon(t)u_0$,
we would like to establish the existence and the uniqueness of a solution
in a suitable functional setting and to write it as 
$u=\Phi(u_0)$, where
$\Phi(u_0)(t)=\Psi(u_1)(t)=\sum_{k=1}^\infty T_k(u_1)$.
For later use, we will need the series being absolutely convergent in $L^\infty([0,\infty),L^1(\R^d))$.
There are several ways to achieve this: the quickest way 
is to  choose $\mathcal{F}$ to be an appropriate subspace of $L^\infty([0,\infty),L^1(\R^d))$.
A good choice is the following: we define $\mathcal{F}$ to be the Banach space of all $L^\infty([0,\infty),L^1(\R^d))$ functions
such that $\|f\|_{\mathcal{F}}<\infty$, where
\begin{equation}
\begin{split}
\|f\|_{\mathcal{F}}
&=
\sup_{t>0}\|f(t)\|_1 + \sup_{t>0}\sqrt t\|\nabla_x f(t)\|_1\\
&\qquad\quad
+\sup_{t>0} (1+t)^{n/2}\|f(t)\|_\infty + \sup_{t>0}(1+t)^{(n+1)/2}\|\nabla_x f(t)\|_\infty
<\infty.
\end{split}
\end{equation}

We are now in the position of establishing the following proposition:

\begin{proposition}
\label{prop:lili}
There are two constants $\eta>0$ and $c>0$ such that if
\begin{equation}
\label{sett}
\|u_0\|_1+\|u_0\|_\infty<\eta,
\end{equation}
then there is a solution $u^\epsilon\in\mathcal{F}$ of equation~\eqref{eq:comp-appr}, such that
\begin{equation}
\label{sols2}
u^\epsilon=\Phi(u_0)(t)\equiv\sum_{k=1}^\infty T_k\bigl(M_\epsilon(t)u_0\bigr),
\end{equation}
belonging to and unique in the ball
$\{u\in \mathcal{F}\colon \|u\|_{\mathcal{F}}\le c\eta\}$.
The series~\eqref{sols2} is absolutely convergent in the~$\mathcal{F}$-norm.
In particular,
$u^\epsilon$ is uniformly integral in time and, for all $t\ge0$,
\begin{equation}
 \label{integral}
 \int u^\epsilon(x,t)\dd x=\sum_{k=3}^\infty \int T_k\bigl(M_\epsilon(t)u_0\bigr)\dd x.
 \end{equation} 
Moreover, the limit
\[\lim_{t\to+\infty} \int u^\epsilon(x,t)\dd x\]
does exist and is finite.
\end{proposition}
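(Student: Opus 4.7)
The plan is to apply the abstract iteration scheme~\eqref{ATE}--\eqref{snc} to the integral formulation~\eqref{abstr:eq} in the Banach space $\mathcal{F}$. Two ingredients suffice: a linear bound $\|u_1\|_\mathcal{F}\le C_0(\|u_0\|_1+\|u_0\|_\infty)$ with $u_1=M_\epsilon(t)u_0$, and continuity $\|B(u,v)\|_\mathcal{F}\le \|B\|\,\|u\|_\mathcal{F}\|v\|_\mathcal{F}$ of the bilinear operator on $\mathcal{F}$. The linear bound is immediate from Young's convolution inequality together with the kernel estimates~\eqref{scalingM}--\eqref{decay-q}, which hold for $M_\epsilon$ including $q=1$ as noted just above: one obtains $\|M_\epsilon(t)u_0\|_1\le C\|u_0\|_1$, $\|M_\epsilon(t)u_0\|_\infty\le \min\{C\|u_0\|_\infty,Ct^{-n/2}\|u_0\|_1\}$, and analogous bounds for $\nabla M_\epsilon(t)u_0$ with an extra $t^{-1/2}$ factor.

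For the bilinear estimate I would bound each of the four seminorms defining $\|\cdot\|_\mathcal{F}$ in turn. Writing $N(u,v)=u\cdot\nabla v+\tfrac12 u\,\div v$, the pointwise bound $|N(u,v)|\le C|u||\nabla v|$ gives $\|N(u,v)(s)\|_p\le C\|u(s)\|_\infty\|\nabla v(s)\|_p$; Young's inequality in~$y$ then reduces each seminorm to a time-integral of the form $\int_0^t \|\partial_x^m M_\epsilon(t-s)\|_\alpha\,\|N(u,v)(s)\|_\beta\dd s$ with $1+\tfrac1q=\tfrac1\alpha+\tfrac1\beta$. Using the $\mathcal{F}$-norm bounds on $u$ and $\nabla v$, the integrand decays like $(t-s)^{-\gamma}(1+s)^{-n/2}s^{-1/2}$ for appropriate $\gamma$, and the usual splitting $\int_0^t=\int_0^{t/2}+\int_{t/2}^t$ handles the cases where a spatial derivative falls on an $L^\infty$ output. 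All resulting integrals converge uniformly in $t$ with the correct decay rate, provided $n\ge 2$. Setting $c=2C_0$ and choosing $\eta$ small enough that $c\eta\le 1/(2\|B\|)$ puts the data in the regime of~\eqref{absmall}, and the abstract scheme produces $u^\epsilon=\Psi(u_1)\in \mathcal{F}$ as an absolutely convergent series in~$\mathcal{F}$, unique in $\{\|u\|_\mathcal{F}\le c\eta\}\subset \overline{B_\mathcal{F}}(0,1/(2\|B\|))$; in particular $\sup_t\|u^\epsilon(t)\|_1<\infty$.

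For the identity~\eqref{integral}, absolute convergence in $L^\infty_tL^1_x$ lets one exchange $\int \dd x$ with the sum. By~\eqref{ideM1}, $T_1(u_1)=e^{t\Delta}u_0$, so $\int T_1(u_1)\dd x=\int u_0\dd x$; and since $\div u_1=0$ implies $u_1\cdot\nabla u_1=\nabla\cdot(u_1\otimes u_1)$, the identity $\widehat{M_\epsilon}(0,t)={\rm Id}$ (immediate from~\eqref{symbM}) yields $\int T_2(u_1)\dd x=0$, which together accounts for the claimed indexing. To show that $\int u^\epsilon(t)\dd x$ admits a finite limit, I return directly to~\eqref{abstr:eq} and use $\widehat{M_\epsilon}(0,\cdot)={\rm Id}$ together with the componentwise integration-by-parts identity $\int[u\cdot\nabla u+\tfrac12 u\,\div u]_k\dd y=-\tfrac12\int u_k\,\div u\dd y$ to obtain
\[
\int u^\epsilon(x,t)\dd x-\int u_0\dd x \;=\; \int B(u^\epsilon,u^\epsilon)(x,t)\dd x \;=\; \tfrac12\int_0^t\!\!\int u^\epsilon(y,s)\,\div u^\epsilon(y,s)\dd y\dd s.
\]
The inner integrand is controlled by $\|u^\epsilon(s)\|_\infty\|\div u^\epsilon(s)\|_1\le \|u^\epsilon\|_\mathcal{F}^2(1+s)^{-n/2}s^{-1/2}$, which is integrable on $(0,\infty)$ for $n\ge 2$; hence the limit exists and is finite.

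The main obstacle is the bilinear estimate: juggling the four seminorms of $\mathcal{F}$ simultaneously, and verifying that each time-integral decays at the correct rate in~$t$ and converges uniformly, is where the genuine bookkeeping lies. Once $B$ is proved continuous on $\mathcal{F}$, the remaining claims reduce to the abstract scheme together with the two structural facts $\widehat{M_\epsilon}(0,\cdot)={\rm Id}$ and $\div u_1=0$.
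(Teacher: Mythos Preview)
Your argument is correct and tracks the paper's proof closely for the existence and uniqueness part: the paper likewise establishes the bilinear bound on $\mathcal{F}$ via Young's inequality, using the splitting $\int_0^{t/2}+\int_{t/2}^t$ for the $L^\infty$-seminorms, and then invokes the abstract scheme~\eqref{ATE}--\eqref{snc}. One small point: to justify the indexing in~\eqref{integral} you should make explicit that $\int u_0=0$, which follows from $u_0\in L^1$ being divergence-free; the paper records this as~\eqref{vanin}, while you only write $\int T_1(u_1)\dd x=\int u_0\dd x$ without closing the loop.

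Where you genuinely diverge from the paper is in the proof that $\lim_{t\to\infty}\int u^\epsilon(t)\dd x$ exists. The paper argues term by term through the series: for each $k\ge3$ it shows that the limit $\ell_k=\lim_{t\to\infty}\int T_k(u_1)(t)\dd x$ exists (via the Schwarz bound $\|T_l(u_1)(s)\|_2\|\nabla T_{k-l}(u_1)(s)\|_2\lesssim(1+s)^{-(n+1)/2}$), then uses~\eqref{ATE} to see that $\sum_{k\ge3}\ell_k$ converges and exchanges the limit with the summation. Your route is more direct: you work with the full solution, exploit the integration-by-parts identity $\int[u\cdot\nabla u+\tfrac12 u\,\div u]_k\dd y=-\tfrac12\int u_k\,\div u\dd y$ together with $\widehat{M_\epsilon}(0,\cdot)=\mathrm{Id}$, and reduce everything to the integrability of $(1+s)^{-n/2}s^{-1/2}$ on $(0,\infty)$. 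This is shorter and avoids a second pass through the series. What the paper's approach buys, on the other hand, is a termwise expansion $\vec\lambda_\epsilon=\sum_{k\ge3}\ell_k$, which is exactly what is exploited in Section~\ref{sec:example} to isolate the leading $\eta^3$-contribution and exhibit data with $\vec\lambda_\epsilon\neq0$.
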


The smallness condition~\eqref{sett} is somewhat unpleasant ---one usually express smallness conditions
in scaling invariant norm, like the $L^n$-norm or even weaker norms--- and can indeed be relaxed.
We will indicate in Section~\ref{sec:relax} how to obtain global solutions belonging to $\mathcal{F}$ replacing condition~\eqref{sett} by a much weaker condition on a scaling invariant
Besov space, at the price of needing more involved bilinear estimates.

\begin{proof}
The bicontinuity of the bilinear operator~$B$ is easily proved in
this space~$\mathcal{F}$.
Indeed, the two $L^1$-estimates for $B(u,v)$ and $\nabla B(u,v)$
follow easily applying to $M_\epsilon$ the first of~\eqref{decay-q}
with $q=1$ and $m=0,1$:
this gives
\[
\|B(u,v)(t)\|_1  
\le \int_0^t \|u(s)\|_1\|\nabla v(s)\|_\infty\dd s\lesssim \|u\|_{\mathcal{F}}\|v\|_{\mathcal{F}}
\]
and
\[
\sqrt{t}\,
\|\nabla B(u,v)(t)\|_1
\le
\sqrt{t}
\int_0^t(t-s)^{-1/2}\|u(s)\|_1\|\nabla v(s)\|_\infty
\lesssim \|u\|_{\mathcal{F}} \|v\|_{\mathcal{F}}.
\]
We obtain the two $L^\infty$ estimates for $B(u,v)$ and $\nabla B(u,v)$ 
by splitting the integral at~$t/2$: 
for the integral $\int_0^{t/2}\ldots$ we apply
to $M_\epsilon$ the first of~\eqref{decay-q}
with $q=\infty$ and $m=0,1$; for the integral $\int_{t/2}^\infty\ldots$
we apply to $M_\epsilon$ the first of~\eqref{decay-q} with $q=1$ and $m=0,1$.
This implies the required estimate
\[
\|B(u,v)\|_{\mathcal{F}}\lesssim C\|u\|_{\mathcal{F}}\|v\|_{\mathcal{F}}.
\]

On the other hand, if $u_0\in L^1\cap L^\infty(\R^n)$, and if $u_0$ is divergence-free, then
$M_\epsilon(t)u_0\in \mathcal{F}$, because $M_\epsilon(t)u_0$ agrees with $e^{t\Delta}u_0$.
In fact, by the standard heat kernel estimates:
\[
\|M_\epsilon(t)u_0\|_{\mathcal{F}}\lesssim\|u_0\|_1+\|u_0\|_\infty.
\]
As discussed before, the above estimates imply that, if $\eta>0$ is small enough, then
the series~\eqref{sols2} converges in the $\mathcal{F}$-norm.
Its sum is the unique solution of equation~\eqref{eq:comp-appr}
in a ball of~$\mathcal{F}$ centered at zero and with small radius.
This series converges in particular in the $L^\infty([0,\infty),L^1(\R^n))$-norm, so we can integrate~\eqref{sols2} and
exchange the integral and summations symbols.
Hence, $\int u^\epsilon(x,t)\dd x=\sum_{k=1}^\infty \int T_k\bigl(M_\epsilon(t)u_0\bigr)\dd x$.

To deduce~\eqref{integral} it only remains to check that 
$\int u_1(t)\dd x=0$ and $\int T_2(u_1)\dd x=0$.
But
\begin{equation}
\label{vanin}
\
\int u_1(t)\dd x=
\int M_\epsilon(t)u_0(x)\dd x= \int u_0(x)\dd x=0,
\end{equation}
because $u_0$ is integrable and divergence-free (the last inequality is well known and easy 
to check using the Fourier transform).
Moreover,
\[
\begin{split}
\int T_2(u_1)\dd x 
&=\int B(u_1,u_1)(t)\dd x\\
&=-\int_0^t\!\!\!\int M_\epsilon(t-s)[u_1\cdot \nabla u_1+\textstyle\frac12\div(u_1)u_1]\dd x\dd s\\
&=-\int_0^t\!\!\!\int [u_1\cdot \nabla u_1+\textstyle\frac12\div(u_1)u_1]\dd x\dd s\\
&=\frac12\int_0^t\!\!\!\int [\div (u_1)u_1](s)\dd x\dd s=0,
\end{split}
\]
because, by~\eqref{ideM2}, 
\[
\div(u_1(s))=M_\epsilon(s)\div(u_0)=0.
\]

Let us now discuss the existence of the limit $\lim_{t\to+\infty} \int u^\epsilon(t)\dd x$.
First of all, observe that
\[
\int T_k(u_1)(t)\dd x
=-\sum_{l=1}^{k-1} 
 \int_0^t\!\!\!\int 
  [T_l(u_1)\cdot \nabla T_{k-l}(u_1)+\textstyle\frac12\div(T_l(u_1))T_{k-l}(u_1)]\dd x\dd s.
\]
Moreover, using the definition of the $\mathcal{F}$-norm we get  
\[
\|T_l(u_1)(s)\|_2\|\nabla T_{k-l}(u_1)(s)\|_2\le c_{l,k}(1+s)^{-(n+1)/2},
\]
so, by Schwarz inequality, the integrand in the last equality is in $L^1(\R^+\times \R^n)$.
Hence the limit
\[ 
\ell_k=\lim_{t\to+\infty}\int T_k(u_1)(t)\dd x
\]
does exist and equals
$-\sum_{l=1}^{k-1}\int_0^\infty\!\!\int 
  [T_l(u_1)\cdot \nabla T_{k-l}(u_1)+\textstyle\frac12\div(T_l(u_1))T_{k-l}(u_1)]\dd x\dd s$.
Moreover, $|\ell_k|\le \|T_k(u_1)\|_{\mathcal{F}}$ and~\eqref{ATE} ensures
that the series $\sum_{k=3}^\infty \ell_k$ converges under our smallness assumption on~$u_0$.
On the other hand, the convergence of series~\eqref{sols2} in the $L^\infty(\R^+,L^1(\R^n))$-norm
allows us to exchange in~\eqref{integral} the limit as $t\to+\infty$ with the summation, leading to
$\lim_{t\to+\infty} \int u^\epsilon(t)\dd x=\sum_{k=3}^\infty \ell_k$.
\end{proof}

\section{Long time behavior of $\int u^\epsilon(x,t)\dd x$: an explicit example with $\vec\lambda_\epsilon\not=0$}
\label{sec:example}

This section can be skipped on a first reading. Its goal is to prove that in general $\vec\lambda_\epsilon\not=0$.
This is interesting to appreciate the relevance of the statement of 
Theorem~\ref{th:main}, in particular the interest of the two-sided bound~\eqref{eq:ul-bounds-u}. But the computations of this section are not needed for proving
Theorem~\ref{th:main} itself.

\subsection{The first term in the expansion of $\int u^\epsilon(t)\dd x$.}

We will need an explicit formula for the first term in the right-hand side of~\eqref{integral}.
\begin{equation*}
\begin{split}
\int &T_3(u_1)\dd x
=\int B(u_1,T_2(u_1))\dd x+\int B(T_2(u_1),u_1)\dd x\\
&=-\int_0^t\!\!\!\int  M_\epsilon(t-s)
	\Bigl[u_1\cdot\nabla T_2(u_1)+\textstyle\frac12 u_1\,\div(T_2(u_1))+\frac12T_2(u_1)\div(u_1))
+T_2(u_1)\cdot\nabla u_1\Bigr]\dd s\dd x\\
&=-\int_0^t\!\!\!\int 
	\Bigl[u_1\cdot\nabla T_2(u_1)+\textstyle\frac12 u_1\,\div(T_2(u_1))
+T_2(u_1)\cdot\nabla u_1\Bigr]\dd s\dd x,
\end{split}
\end{equation*}
where have dropped the term $\frac12T_2(u_1)\div(u_1)$ that is identically zero as $\div(u_1)=0$.
But integrating by parts and using again 
$\div(u_1)=0$ shows that $\int u_1\cdot\nabla T_2(u_1)\dd x=0$,
so the first term  inside the integral can also be dropped.
Another integration by parts finally yields,
\begin{equation}
\label{inte3}
\begin{split}
\int T_3(u_1)\dd x
&=\frac12\int_0^t\!\!\!\int
	\bigl[u_1\,\div(T_2(u_1))\bigr](s)\dd s\dd x.
\end{split}
\end{equation}
On the other hand,
\[
T_2(u_1)=B(u_1,u_1)=-\int_0^t M_\epsilon(t-s)(u_1\cdot\nabla u_1(s))\dd s,
\]
and so
\[
\div(T_2(u_1))
=-\int_0^t e^{(t-s)(1+1/\epsilon)\Delta}\div(u_1\cdot \nabla u_1)(s)\dd s,
\]
where  we applied identity~\eqref{ideM2}.
Replacing this expression in the formula~\eqref{inte3}, next using $u_1(t)=M_\epsilon(t)u_0$ 
and~\eqref{ideM1} leads to (we omit the summation on the repeated subscripts):
\begin{equation}
\label{int3}
\begin{split}
\int T_3(u_1)(t)\dd x
  &=-\frac12 \int\!\!\!\int_0^t
  e^{s\Delta}u_0  
  \biggl[\int_0^s e^{(s-\tau)(1+1/\epsilon)\Delta}
  \partial_k\Bigl((e^{\tau\Delta}u_{0,h})(\partial_h e^{\tau\Delta}u_{0,k})\Bigr)\dd \tau\biggr]
  \dd s\dd x.
\end{split}
\end{equation}

We now apply formula~\eqref{int3} to initial data of the following form: $u_0=\eta v_0$,
where $v_0$ is a fixed divergence-free vector field in 
$L^1\cap L^\infty(\R^n)$ and the parameter $\eta>0$ will be chosen small enough, ensuring in this way the
validity of the smallness condition~\eqref{absmall}.
For such choice of $u_0$, we see that the first term in the 
summation appearing in the right-hand side
of~\eqref{integral} satisfies
\[
\int T_3(u_1)(t)\dd x=\eta^3\int T_3(v_1)(t)\dd x, \qquad v_1=M_\epsilon(t)v_0.
\]
The sum of all the other terms of~\eqref{integral} are 
$\mathcal{O}(\eta^4)$ as $\eta\to0$.
Hence, 
\[
\int u^\epsilon(x,t)\dd x=\eta^3\int T_3(v_1)(t)\dd x+\mathcal{O}(\eta^4),
\qquad
\text{as $\eta\to0$}.
\]
Let us choose a divergence-free vector field $v_0\in L^1\cap L^\infty$ such that, for a fixed~$t$,
$\int T_3(v_1)(t)\dd x\not=0$: then it follows that, choosing a $\eta=\eta(t)>0$ small enough,
$\int T_3(u_1)(t)\dd x\not=0$, and so $\int u^\epsilon(t)\dd x\not=0$ by~\eqref{integral}.
In the same way, if we choose $v_0$ in a such way that 
\begin{equation}
\label{liminoz}
\lim_{t\to+\infty}\int T_3(v_1)(t)\dd x\not=0,
\end{equation} 
then, for $u_0=\eta v_0$ and $\eta>0$ small enough, we get
\[
\lim_{t\to+\infty}\int u^\epsilon(t)\dd x\not=0.
\] 
Now, one expects $\int T_3(v_1)(t)\dd x$ not to be zero generically, but this claim should be proved rigorously.
In next subsection, we will content ourselves to construct, by an explicit computation, a simple 
example of a divergence-free vector field~$v_0$ in the Schwartz class such that the non-vanishing limit
condition~\eqref{liminoz} holds.

%

\subsection{Example of initial data such that $\vec\lambda_\epsilon\not=0$.}
\label{sub-sec:ex}
Let construct an initial datum satisfying~\eqref{liminoz}.
Let us consider a divergence-free three-dimensional vector field of the form
\[
v_0=
\begin{pmatrix} -\partial_2 g\\ \partial_1 g\\0\end{pmatrix},
\]
where $g$ is a smooth and well decaying scalar function to be chosen later on.
Expanding the term $\div(v_1\cdot\nabla v_1)$
for such a vector field~$v_0$ (recalling $v_1(\tau)=M_\epsilon(\tau) v_0=e^{t\Delta}v_0$)
we get after a few simplifications
\begin{equation}
\label{compi}
\begin{split}
\div(v_1\cdot\nabla v_1)(\tau)
= 2\Bigl(\partial_1\partial_2 E(\cdot,\tau)*g\Bigr)^2
-2\Bigl(\partial_1^2 E(\cdot,\tau)*g\Bigr)\Bigl(\partial_2^2 E(\cdot,\tau)*g\Bigr).
\end{split}
\end{equation}
We want to choose $g$ in order to make the computations as explicit as possible, 
avoiding to take however a radial function (we cannot put too many symmetries,
otherwise the integral 
$\int T_3(v_1)\dd x$ could vanish).
A good choice will be to take a derivative of the gaussian function, as 
the constants in the 
subsequent computations can be easily performed in this way by a formal calculus program.
For example, let
\[
g(x)=\partial_2E(x,1).
\]
We denote by~$\kappa$ in the following computations a non-zero constant that may change from line to line.
Computing the Fourier transform in~\eqref{compi} gives (the second equality is computer-assisted, the result can be easily checked e.g. with Maple):
\[
\begin{split}
\mathcal{F}\Bigl[
	\div(v_1\cdot\nabla v_1)(\tau)\Bigr](\tau)
&= 
2\mathcal{F}
\Bigl[
\Bigl(\partial_1\partial_2^2 E(\cdot,\tau+1)\Bigr)^2
-\Bigl(\partial_1^2\partial_2 E(\cdot,\tau+1)\Bigr)\Bigl(\partial_2^3 E(\cdot,\tau+1)\Bigr)
\Bigr]\\
&=
  \frac{\sqrt 2}{512\,\pi^{3/2}}\Bigl((\xi_1^2\xi_2^2+\xi_2^4)(\tau+1)-3\xi_1^2-\xi_2^2\Bigr)
\frac{e^{-(\tau+1)|\xi|^2/2}}{(\tau+1)^{7/2}}.
\end{split}
\]
Applying Plancherel theorem to Eq.~\eqref{int3} yields:
\begin{equation}
\label{inee}
\begin{split}
\int &T_3(v_1)(x,t)\dd x\\
&=-\kappa \int_0^t\!\!\!\int_0^s\!\!\!\int 
 \frac{e^{-[(s-\tau)(1+1/\epsilon)+s+(\tau+3)/2]\,|\xi|^2}}{(\tau+1)^{7/2}}
 {\begin{pmatrix}
 \xi_2^2\\-\xi_1\xi_2\\0
 \end{pmatrix}}
 \Bigl((\xi_1^2\xi_2^2+\xi_2^4)(\tau+1)-3\xi_1^2-\xi_2^2\Bigr)
 \dd \xi\dd s \dd \tau
\end{split}
\end{equation}
where $\kappa=\frac{\sqrt 2}{512\,\pi^{3/2}}$.
Let us focus on the first component of the above integral (the second and the third components vanish
for symmetry reasons for all $t>0$).
Letting $\lambda=(s-\tau)(1+1/\epsilon)+s+(\tau+3)/2$ we are led to calculate (with a computer assisted computation) first the integral
in the $\xi$-variable:
\[
\int e^{-\lambda|\xi|^2}\xi_2^2\Bigl((\xi_1^2\xi_2^2+\xi_2^4)(\tau+1)-3\xi_1^2-\xi_2^2\Bigr)\dd\xi
=-\frac{3\,\pi^{3/2}}{4\,\lambda^{9/2}}(2\lambda-3\tau-3).
\]
But $s>\tau$, so $2\lambda-3\tau-3>0$ and it follows that the first component of
$\int T_3(v_1)(x,t)\dd x$ is strictly positive for all $t>0$.
In fact, the first component of the map $t\mapsto \int T_3(v_1)(x,t)\dd x$ is strictly increasing and
so the fist component of the limit $\lim_{t\to\infty} \int T_3(v_1)(x,t)\dd x$ is a strictly
positive real number. 

%

Summarizing, recalling also 
the discussion at the beginning of this section, we 
established that if $\eta>0$ is small enough and 
\[ u_0(x)=\eta\begin{pmatrix}
-\partial_2^2 E(x,1)\\
\partial_1\partial_2 E(x,1)\\
0
\end{pmatrix}
\]
then the solution of~\eqref{eq:comp-appr} starting from~$u_0$ satisfies
\[
\vec{\lambda}_\epsilon=\lim_{t\to\infty}\int u^\epsilon(x,t)\dd x\not=0,
\]
as the first component of~$\vec \lambda$ is strictly positive.

\section{Relaxing the smallness assumption}
\label{sec:relax}

\subsection{Global solutions with small data in Besov spaces}
Inspired by T.~Kato's arguments~\cite{Kat84} for the classical Navier--Stokes equations,
we introduce the following Banach space, for $n\le p<\infty$, where we are going to construct our solutions.
\begin{subequations}
\begin{equation}
\label{Xp}
\mathcal{X}_p=
\{u\in C((0,\infty),L^p(\R^n))\colon
 \|u\|_{\mathcal{X}_p}=\sup_{t>0} t^{(1/2)(1-n/p)}\|u(t)\|_p
+\sup_{t>0} t^{1-n/(2p)}\|\nabla u(t)\|_p<+\infty\}.
\end{equation}
\end{subequations}
Notice that the $\mathcal{X}_p$-norm is left invariant by the natural rescaling 
$u\mapsto u_\lambda=\lambda u(\lambda x,\lambda^2 t)$. Eq:~\eqref{eq:comp-appr} is itself left invariant
by the above rescaling.
If $u_0\in L^n(\R^n)$,  then
the solution of the heat equation $e^{t\Delta}u_0$ belongs to $\mathcal{X}_p$, for $n\le p\le\infty$,
and
\[
\|e^{t\Delta}u_0\|_{\mathcal{X}_p} \le C_p\|u_0\|_n.
\]

The solutions constructed in $\mathcal{X}_p$ will often have faster time decay as $t\to\infty$  than 
predicted by the $\mathcal{X}_p$-norm.
The following space will be useful to describe the decay properties of solutions arising from integrable initial data.
To this purpose, let us introduce,  for $1\le q\le \infty$,
\begin{equation}
\label{Yq}
\begin{split}
\mathcal{Y}_q
=\{u\in C((0,,\infty),L^q(\R^n))\colon
 \|u\|_{\mathcal{Y}_q}=\sup_{t>0}&(1+t)^{(n/2)(1-1/q)}\|u(t)\|_q \\
& +\sup_{t>0} t^{1/2}(1+t)^{(n/2)(1-1/q)}\|\nabla u\|_q<\infty\}.
\end{split}
\end{equation}

First of all, observe that, for $n<p\le\infty$, a tempered distribution in $\R^n$
$u_0$ belongs to the Besov space $ \dot B^{-1+n/p}_{p,\infty}$ if and only if
$t^{(1/2)(1-n/p)} e^{t\Delta}u_0\in L^\infty(\R^+,L^p(\R^n))$ and we have the norm equivalence
\begin{equation}
\label{besov-heat}
\|u_0\|_{\dot B^{-1+n/p}_{p,\infty}}\simeq \sup_{t>0}t^{(1/2)(1-n/p)}\|e^{t\Delta}u_0\|_{p}.
\end{equation}
See~\cite{BahCD11}, \cite{Lem02}.
The scaling of $\dot B^{-1+n/p}_{p,\infty}$ agrees with that of $L^n(\R^n)$ and for $n<p\le \infty$ we have the
inclusion $L^n(\R^n)\subset \dot B^{-1+n/p}_{p,\infty}$.

\begin{proposition}
\label{prop:l1l3}
\mbox{}
\begin{enumerate}
\item[(1)]
Let $n<p<2n$ and $u_0\in \dot B^{-1+n/p}_{p,\infty}$ be a divergence-free vector field. 
There exists $\eta_p>0$ such that if $\|u_0\|_{ \dot B^{-1+n/p}_{p,\infty}}<\eta$ then there is only
one solution $u^\epsilon\in \mathcal{X}_{p}$ to the problem~\eqref{eq:comp-appr}, such that
$\|u^\epsilon\|_{\mathcal{X}_{p}}<2\eta_p$.
Such solution $u^\epsilon$ belongs also to $\mathcal{X}_q$, for $p\le q\le\infty$.
If, more precisely, $u_0\in L^n(\R^n)$, then 
$u^\epsilon$ belongs also to $\mathcal{X}_q$, for $n\le q\le\infty$ and in this case
$u^\epsilon\in C([0,\infty),L^n(\R^n))$. 

\item[(2)]
Under the additional assumption $u_0\in L^1(\R^n)$, then, for all $1\le q\le \infty$,
$u^\epsilon\in \mathcal{Y}_q$.
\end{enumerate}
\end{proposition}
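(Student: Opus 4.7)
\textbf{Proof plan for Proposition~\ref{prop:l1l3}.}

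The plan is a standard Kato-type fixed point scheme, carried out in the spaces $\mathcal{X}_p$ and then upgraded via bootstrap. First I set up the integral equation $u^\epsilon=M_\epsilon(t)u_0+B(u^\epsilon,u^\epsilon)$ with $B$ given by~\eqref{eq:210}. For the linear part, the Besov characterization~\eqref{besov-heat} together with the fact that $M_\epsilon(t)$ acts on divergence-free data as $e^{t\Delta}$ (identity~\eqref{ideM1}) immediately gives $\|M_\epsilon(t)u_0\|_{\mathcal{X}_p}\le C_p\|u_0\|_{\dot B^{-1+n/p}_{p,\infty}}$; the gradient piece of the $\mathcal{X}_p$-norm is handled analogously using $\nabla e^{t\Delta}u_0$. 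For the bilinear part, with $u,v\in\mathcal{X}_p$, H\"older gives $\|u\cdot\nabla v(s)\|_{p/2}\lesssim s^{-3/2+n/p}\|u\|_{\mathcal{X}_p}\|v\|_{\mathcal{X}_p}$; then Young's inequality applied to the convolution with $M_\epsilon(t-s)$, whose $L^\alpha$-norm with $1/\alpha=1-1/p$ obeys~\eqref{decay-q}, produces
\[
\|B(u,v)(t)\|_p\lesssim \Bigl(\int_0^t(t-s)^{-n/(2p)}s^{-3/2+n/p}\dd s\Bigr)\|u\|_{\mathcal{X}_p}\|v\|_{\mathcal{X}_p}
=C\,t^{-(1/2)(1-n/p)}\|u\|_{\mathcal{X}_p}\|v\|_{\mathcal{X}_p},
\]
which is finite precisely because the assumption $n<p<2n$ makes both exponents $n/(2p)$ and $3/2-n/p$ less than $1$. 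The analogous estimate for $\nabla B(u,v)$ uses the second family in~\eqref{decay-q} (with an extra $(t-s)^{-1/2}$). Continuity of $B$ on $\mathcal{X}_p$ and smallness of the data then yield the unique solution by the abstract scheme~\eqref{snc}--\eqref{absmall}.

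To obtain $u^\epsilon\in\mathcal{X}_q$ for $p\le q\le\infty$, I proceed by bootstrap. Once the solution is in $\mathcal{X}_p$, I rerun the bilinear estimate above with the output space $L^q$: choose $1/\alpha=1+1/q-2/p$, apply Young with $M_\epsilon$ to gain the factor $(t-s)^{-(n/2)(1-1/\alpha)}$, and integrate against the time decay of $u,\nabla u$ in $L^p$. The integral is again finite for $q$ in this range and produces the correct $\mathcal{X}_q$-rate $t^{-(1/2)(1-n/q)}$ (and similarly for the gradient piece). When in addition $u_0\in L^n(\R^n)$, the embedding $L^n\hookrightarrow\dot B^{-1+n/p}_{p,\infty}$ gives smallness in the Besov norm, so the previous construction applies. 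Continuity at $t=0$ in the $L^n$-norm is standard: one first checks it for $u_0$ in a dense subclass (say Schwartz data), where $M_\epsilon(t)u_0\to u_0$ strongly in $L^n$ and the bilinear term vanishes as $t\to 0^+$ by the bilinear bound, then extends by the contraction property.

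For Part (2), the extra assumption $u_0\in L^1(\R^n)$ is propagated by rerunning Young's inequality with $M_\epsilon(t-s)$ and the two natural choices of exponents. The linear contribution satisfies
\[
\|M_\epsilon(t)u_0\|_q\le \min\bigl(\|M_\epsilon(\cdot,t)\|_q\|u_0\|_1,\;\|M_\epsilon(\cdot,t)\|_1\|u_0\|_q\bigr)
\lesssim (1+t)^{-(n/2)(1-1/q)},
\]
using~\eqref{decay-q} including $q=1$ (which holds for $M_\epsilon$ as noted in Section~\ref{sec:uit}). For the bilinear term, I combine the $\mathcal{X}_q$-bounds already obtained with $L^1$-bounds generated from the $L^2$-$L^2$ product: one gets $\|u\cdot\nabla v(s)\|_1\le\|u(s)\|_2\|\nabla v(s)\|_2$, with the two $L^2$-factors decaying at the established rates, and convolving with $M_\epsilon(t-s)$ in $L^q$ (using $\|M_\epsilon(\cdot,t-s)\|_q$ from~\eqref{decay-q}) yields the $\mathcal{Y}_q$-rate for large $t$. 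For small $t$ one uses the $\mathcal{X}_p$-estimates directly, and the gradient weight $t^{1/2}$ in the definition of $\mathcal{Y}_q$ is handled by inserting $(t-s)^{-1/2}$ from the $\nabla M_\epsilon$ bound.

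The main obstacle is the endpoints $q=1$ and $q=\infty$ in $\mathcal{Y}_q$ together with the requirement that the bilinear estimates remain time-integrable uniformly in $\epsilon$. Specifically, passing to $q=1$ relies crucially on the fact that $M_\epsilon(\cdot,t)$ is Schwartz and in particular integrable, which is the observation made right after~\eqref{symbM}; without this $L^1$-boundedness of the kernel the $q=1$ case of Part~(2) would fail. The endpoint $q=\infty$ for $\mathcal{X}_q$ is handled by the usual splitting of the time integral at $t/2$, already used in the proof of Proposition~\ref{prop:lili}, which avoids the logarithmic divergence that would otherwise appear.
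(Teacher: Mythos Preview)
Your overall scheme matches the paper's: fixed point in $\mathcal{X}_p$ via~\eqref{ideM1} and~\eqref{besov-heat}, then bootstrap to $\mathcal{X}_q$, then propagate the $L^1$ information using that $M_\epsilon(\cdot,t)\in L^1$. The bilinear bound in $\mathcal{X}_p$ is correct and is exactly what the paper does.

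There is, however, a genuine gap in your bootstrap to $\mathcal{X}_q$ for the \emph{gradient} piece. With your choice $1/\alpha=1+1/q-2/p$ and $u,\nabla u$ taken in $L^p$, the time integral for $\nabla B$ carries the factor $(t-s)^{-1/2-n/p+n/(2q)}$. Since $p<2n$ forces $n/p>1/2$, this exponent exceeds~$1$ once $q$ is large (in particular at $q=\infty$), so the integral near $s=t$ diverges as a \emph{power}, not logarithmically. Splitting at $t/2$ alone cannot cure this: on $[t/2,t]$ you still face the same nonintegrable singularity unless you change the H\"older pairing. The paper resolves this by a genuinely multi-step bootstrap: first obtain $\|\nabla u^\epsilon(t)\|_n\lesssim t^{-1/2}$ and $\|u^\epsilon(t)\|_\infty\lesssim t^{-1/2}$ (these \emph{do} follow from a single application of Young with input in $L^{p/2}$); then, for finite $q$, estimate $\|\nabla B\|_q$ using the mixed pairing $\|u\|_{\tilde p}\|\nabla u\|_n$ with some $\tilde p>q$, which keeps the $(t-s)$-exponent below~$1$; finally, for $q=\infty$, split at $t/2$ and on $[t/2,t]$ use $\|u\|_\infty\|\nabla u\|_{2n}$ (the $L^{2n}$-gradient bound having been obtained at the previous step), which gives an integrable $(t-s)^{-3/4}$. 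Your plan is missing this change of H\"older pairing; without it the claim ``the integral is again finite for $q$ in this range (and similarly for the gradient piece)'' is false.

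A second, smaller gap: in Part~(2) you invoke ``the two $L^2$-factors decaying at the established rates'', but Part~(1) only gives you $\mathcal{X}_q$ for $q\ge n$ (or $q\ge p$), so no $L^2$-decay is yet available when $n\ge3$. The paper bridges this by first importing Kato's recursive estimates~\eqref{kato:est}--\eqref{normest} for $1<r<n$ (whose proof carries over verbatim from Navier--Stokes since the $L^r$--$L^q$ estimates for $M_\epsilon$ coincide with those of the heat semigroup), applied with $r=6/5$, $q=2$; only then does the $\|u\|_2\|\nabla u\|_2$ argument yield the $L^1$-bound. Your plan should insert this intermediate step.
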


\begin{remark}
\label{rem:ln}
When $u_0\in L^n(\R^n)$ with a small enough $L^n$-norm, then Proposition~\ref{prop:l1l3}~(1)
does apply. This follows from the fact the inclusion map $L^n(\R^n)\subset \dot B^{-1+n/p}_{p,\infty}$,
$n<p\le\infty$, is continuous.  
Proposition~\ref{prop:l1l3}, however allows to construct global solution for some initial data with large $L^n(\R^n)$-norm. Indeed, the smallness condition on the Besov norm $\dot B^{-1+n/p}_{p,\infty}$
will be fullfilled as soon as $u_0$ is fast oscillating. This idea of relaxing the smallness condition
using rough spaces goes back to \cite{Can95}.
\end{remark}

\begin{proof}
The proof of the first part is conceptully close to that in~\cite{Kat84} or~\cite{Can95}, with slight changes in
the choice of the exponents of some relevant estimates. For this reason we will be rather sketchy, omitting in particular to discuss the continuity in the time variable that is standard.
The proof consists in applying the standard fixed point argument in $\mathcal{X}_p$ to the 
integral equation~\eqref{abstr:eq}. 

If $u_0\in  \dot B^{-1+n/p}_{p,\infty}$, then $\nabla u_0\in \dot B^{-2+n/p}_{p,\infty}$
and therefore
\begin{equation}
\label{besov-heat-g}
\sup_{t>0}t^{1-n/(2p)}\|\nabla e^{t\Delta}u_0\|_{p} \lesssim \|u_0\|_{\dot B^{-1+n/p}_{p,\infty}}.
\end{equation}
As $u_0$ is divergence-free, $M_\epsilon(t)u_0=e^{t\Delta}u_0$, as already observed in Eq.~\eqref{ideM1}.
Hence,
\begin{equation}
\label{lines1}
\| M_\epsilon(t)u_0\|_{\mathcal{X}_p}\le C_p \|u_0\|_{\dot B^{-1+n/p}_{p,\infty}}, \qquad n<p\le\infty. 
\end{equation}

The estimates on the bilinear term rely on Young convolution inequality and
the estimates, valid for $1\le r\le\infty$ (see the discussion on $M_\epsilon$ at the end of Subsection~\ref{sec:uit}),
\begin{equation}
\label{est:mt-s}
\|M_\epsilon(t-s)\|_{r} \simeq (t-s)^{-\frac{n}{2}(1-1/r)}, \qquad
\|\nabla M_\epsilon(t-s)\|_{r} \simeq (t-s)^{-1/2 -\frac{n}{2}(1-1/r)}.
\end{equation}
More precisely, choosing $r=p'$ (the conjugate exponent of $p$), with $n<p<2n$, we get
\[
\|B(u,v)(t)\|_p \simeq \int_0^t (t-s)^{-n/(2p)}\|u(s)\|_{p}\|\nabla v(s)\|_{p}\dd s
\lesssim t^{-(1/2)(1-n/p)},
\]
and
\[
\|\nabla B(u,v)(t)\|_p \simeq \int_0^t (t-s)^{-1/2 -n/(2p)}\|u(s)\|_{p}\|\nabla v(s)\|_{p}\dd s
\lesssim t^{-(1-n/(2p))}.
\]
Combining these two estimates we see that, for some constant $C'_p>0$ and all $u,v$:
\begin{equation}
\label{biles1}
\|B(u,v)\|_{\mathcal{X}_p}\le C'_p \|u\|_{\mathcal{X}_p}\|v\|_{\mathcal{X}_p},
\qquad n<p<2n.
\end{equation}
Applying the fixed point lemma (\cite{BahCD11}*{Lemma~5.5}), using estimates~\eqref{lines1} and \eqref{biles1} yields the existence and the unicity of the solution in the ball
$\{u\colon \|u\|_{\mathcal{X}_p}<1/2(C_pC'_p)\}$ of the space $\mathcal{X}_{p}$, 
provided $\|u_0\|_{\dot B^{-1+n/p}_{p,\infty}}<1/(4C_pC'_p)$, and $n<p<2n$.

Let us now prove that such solution belongs to $\mathcal{X}_q$, for all $n<p\le q\le\infty$
(or $n\le q\le\infty$ if we assume $u_0\in L^n(\R^n)$).
We write $u^\epsilon=M_\epsilon(t)u_0+B(u^\epsilon,u^\epsilon)$ and separate the contributions of the
linear and the nonlinear terms.

First of all, by the semigroup properties of the heat kernel, 
$\|M_\epsilon(t)u_0\|_{\mathcal{X}_q}\lesssim \|u_0\|_{\dot B^{-1+n/p}_{p,\infty}}$.
Under the more stringent condition $u_0\in L^n(\R^n)$ the linear term 
satisfies also $\|M_\epsilon(t)u_0\|_{\mathcal{X}_n}\lesssim \|u_0\|_{L^n(\R^n)}$.
Let us prove that the nonlinear term $B(u^\epsilon,u^\epsilon)$
belongs to $\mathcal{X}_n\cap\mathcal{X}_\infty$ (whether or not $u_0\in L^n(\R^n)$).

Applying twice~\eqref{est:mt-s} with $r$ such that $1+1/n=1/r+2/p$ we get, for $n<p<2n$:
\begin{equation*}
\|B(u^\epsilon,u^\epsilon)(t)\|_n
+t^{1/2}\|\nabla B(u^\epsilon,u^\epsilon)(t)\|_n
\lesssim \|u^\epsilon\|^2_{\mathcal{X}_{p}}
\lesssim \|u_0\|_{\dot B^{-1+n/p}_{p,\infty}},
\end{equation*}
and so
\begin{equation}
\label{est:n}
\|u^\epsilon\|_n
+t^{1/2}\|\nabla u^\epsilon\|_n
\lesssim \|u_0\|_{\dot B^{-1+n/p}_{p,\infty}}.
\end{equation}

It only remains to establish the $L^\infty$ estimates for $u^\epsilon$ and 
$\nabla u^\epsilon$.
The former is easy: applying~\eqref{est:mt-s} with $r$ such that $1=1/r+2/p$, we get
\begin{equation*}
t^{1/2}\|B(u^\epsilon,u^\epsilon)(t)\|_\infty \lesssim \|u^\epsilon\|^2_{\mathcal{X}_{p}},
\end{equation*}
implying
\begin{equation}
t^{1/2}\|u^\epsilon(t)\|_\infty \lesssim \|u_0\|_{\dot B^{-1+n/p}_{p,\infty}}.
\end{equation}
We perform the latter in two steps: 
first, applying~\eqref{est:mt-s} for all $n<p\le q<\tilde p<\infty$, with $1+1/q=1/r+1/\tilde p+1/n$, 
\begin{equation*}
\begin{split}
\|\nabla B(u^\epsilon,u^\epsilon)(t)\|_q 
&\le \int_0^t (t-s)^{-1-\frac{n}{2}(1/\tilde p-1/q)}\|u^\epsilon(s)\|_{\tilde p}\|\nabla u^\epsilon(s)\|_n
\\
&
 \lesssim t^{-1+n/(2q)} \|u_0\|_{\dot B^{-1+n/p}_{p,\infty}},
\end{split}
\end{equation*}
where we used $\|u^\epsilon(t)\|_{\tilde p}\lesssim t^{-(1/2)(1-n/\tilde p)}\|u_0\|_{\dot B^{-1+n/p}_{p,\infty}}$, that follows interpolating the two previous estimates on 
$\|u^\epsilon(t)\|_p$ and $\|u^\epsilon(t)\|_\infty$.
Using the above with $q=2n$ we get
\[
\|\nabla u^\epsilon(t)\|_{2n}\lesssim t^{-3/4} \|u_0\|_{\dot B^{-1+n/p}_{p,\infty}}.
\]
Next,
\[
\begin{split}
\|\nabla B(u^\epsilon,u^\epsilon)(t)\|_\infty 
&\le\int_0^{t/2}(t-s)^{-1/2-n/p}\|u^\epsilon(s)\|_p\|\nabla u^\epsilon\|_p\dd s\\
&\qquad\qquad\qquad
+ \int_{t/2}^t (t-s)^{-3/4}\|u^\epsilon(s)\|_\infty\|\nabla u^\epsilon(s)\|_{2n}\dd s\\
&\lesssim t^{-1}\|u_0\|_{\dot B^{-1+n/p}_{p,\infty}}.
\end{split}
\]
We finally get 
$\|\nabla u^\epsilon(t)\|_\infty
\lesssim t^{-1}\|u_0\|_{\dot B^{-1+n/p}_{p,\infty}}$.
Summarizing, $u^\epsilon$ belongs to $\mathcal{X}_p\cap\mathcal{X}_\infty$ and by interpolation 
for all $p\le q\le\infty$ and $n<p<2n$ (with $q=n$ being allowed under the more stringent assumption
$u_0\in L^n(\R^n)$),
\[
\|u^\epsilon\|_{\mathcal{X}_q}\lesssim \|u_0\|_{\dot B^{-1+n/p}_{p,\infty}},
\]
for all divergence-free distribution $u_0$ 
such that $\|u_0\|_{\dot B^{-1+n/p}_{p,\infty}}<1/(4C_pC'_p)=\eta_p$.
This achieves the proof of Part~(1) of Proposition~\ref{prop:l1l3}.

\medskip
Let us now prove Part~(2), of Proposition~\ref{prop:l1l3}.
We have now the additional assumption $u_0\in L^1(\R^n)$.
More in general, we can prove that under the additional assumption $u_0\in L^r(\R^n)$,
with $1\le r<n$, then the solution constructed in Part (1) satisfies the estimates
\begin{equation}
\label{kato:est}
\begin{split} 
 &(1+t)^{(n/2)(1/r-1/q)} u^\epsilon\in L^\infty((0,\infty),L^q(\R^n))
\quad\text{and}
\quad\\
&t^{1/2}(1+t)^{(n/2)(1/r-1/q)} \nabla u^\epsilon\in L^\infty((0,\infty),L^q(\R^n)),
\end{split}
\end{equation}
provided $ \|u_0\|_{\dot B^{-1+n/p}_{p,\infty}}<\eta'$ and $\eta'>0$ is small enough.
Notice that, at least for $1<r<n$ and $r\le q<\infty$ this type of estimates
were proved in \cite{Kat84}*{Theorem~5} for the Navier--Stokes equations.
Moreover, Kato's recursive inequalities method 
provides the norm estimate, for $1<r<n$ and $r\le q<\infty$ 
(and $\epsilon=0$, \emph{i.e.} in the Navier--Stokes case), 
\begin{equation}
\label{normest}
\sup_{t\ge1}  (1+t)^{(n/2)(1/r-1/q)} \|u^\epsilon\|_q
+t^{1/2}(1+t)^{(n/2)(1/r-1/q)} \|\nabla u^\epsilon\|_q 
\lesssim \|u_0\|_r+\eta'. 
\end{equation}
In fact, for $1<r<n$ and $r\le q<\infty$ Kato's proof can be reproduced 
in our case (\emph{i.e.} in the case $\epsilon>0$)
in exactly the same way: indeed the linear $L^r-L^q$ linear estimates 
for $M_\epsilon(t)u_0$ agree with the heat
kernel estimates, and the nonlinearity in Eq.~\eqref{eq:210} has the same structure
as in the integral Navier--Stokes equations.
For this reason we can skip the proof that~\eqref{normest} hold true also 
for solutions of~\eqref{abstr:eq}.
What remains to be done, is to establish~\eqref{kato:est}
in the limit case $r=1$ or $q=\infty$ that were excluded in Kato's approach.
This is possible because, contrary to the case of Navier--Stokes, we do not have to deal with the
projection operator onto the divergence-free vector field, that is unbounded in $L^1$ and $L^\infty$.
In other words, we take advantage of the fact that the kernel of $M_\epsilon(t)$ belongs to 
$L^1\cap L^\infty(\R^n)$.
More precisely, using~\eqref{est:mt-s} with $r=1$ and
applying~\eqref{kato:est} with $r=6/5$ and $q=2$ we get:
\[
\begin{split}
\|u^\epsilon(t)\|_1 
&\le C\|u_0\|_1+\int_0^t \|u^\epsilon\|_2\|\nabla u^\epsilon\|_2\dd s
\le C,\\
\|\nabla u^\epsilon(t)\|_1 
&\le \|u_0\|_1 t^{-1/2}
+\int_0^t (t-s)^{-1/2} \|u^\epsilon\|_2\|\nabla u^\epsilon\|_2\dd s\le  Ct^{-1/2}.
\end{split}
\]
This establishes~\eqref{kato:est} for $r=1$ and $q=1$.
Using the similar arguments one can easily establish the validity of~\eqref{kato:est} for $r=1$ and $q=\infty$.
By interpolation, these estimates are then valid for $r=1$ and $1\le q\le\infty$, hence Part~(2)
of Proposition~\ref{prop:l1l3} follows.

\end{proof}

\subsection{Application to weak solutions}

The uniqueness of weak solutions for $n=2$ for \eqref{eq:comp-appr} is due to~R.Temam~\cite{Tem68}.
When $n\ge3$, the uniqueness of weak solutions is not known. 
In this case, the analogue of classical weak-strong uniqueness result established for the Navier--Stokes equations, like that of Sohr and von Wahl~\cites{Sohv84}
for~\eqref{eq:comp-appr} will be useful.
We state this as a remark:

\begin{remark}
\label{rem:wsu}
Let $u^\epsilon$ and $v^\epsilon$ be two weak solutions as defined in Section~\ref{sec:main},
satisfying the energy inequality~\eqref{strong:ei} (at least in its weak form, \emph{i.e.} for $s=0$)
and let $v^\epsilon\in C([0,T),L^n(\R^n))$, $0<T\le \infty$. Then $u^\epsilon=v^\epsilon$.
\end{remark}

Indeed, writing the equation for the difference $w^\epsilon=u^\epsilon-v^\epsilon$,
formally multiplying by $w^\epsilon$ using the cancellation
\[
\int (u^\epsilon\cdot\nabla) w^\epsilon\cdot w^\epsilon+\frac12\int(w^\epsilon\,\div u^\epsilon)
\cdot w^\epsilon=0,
\]
one obtains from the constructions of the solutions that $w^\epsilon$ satisfies the inequality
\[
\frac12\|w^\epsilon(t)\|_2^2+\int_0^t\!\!\!\int(w^\epsilon\cdot\nabla v^\epsilon)\cdot w^\epsilon
+\frac12\int_0^t\!\!\!\int v^\epsilon(\div w^\epsilon)\cdot w^\epsilon+\frac1\epsilon\int_0^t\|\div w^\epsilon\|_2^2
+\int_0^t\|\nabla w^\epsilon\|_2^2\le 0.
\]
But
\[
\biggl|\int_0^t\!\!\!\int(w^\epsilon\cdot\nabla v^\epsilon)\cdot w^\epsilon
+\frac12\int_0^t\!\!\!\int v^\epsilon(\div w^\epsilon)\cdot w^\epsilon\biggr|
\le C\int_0^t\!\!\!\int |v^\epsilon|\,|w^\epsilon|\, |\nabla w^\epsilon|.
\]
We can now forget about the term 
$\frac1\epsilon\int_0^t\|\div w^\epsilon\|_2^2$ and proceed as for the classical Navier--Stokes:
splitting $v^\epsilon=\bar v^\epsilon+\tilde v^\epsilon$ with $\bar v^\epsilon$ small in the
$L^\infty((0,T),L^n)$-norm and $\tilde v^\epsilon\in L^\infty((0,T),L^\infty)$ and we can absorb the
above term (see \cite{Lem02} for details of these estimates) and get
$\|w^\epsilon(t)\|_2^2+C\|\tilde v^\epsilon\|_{L^\infty_{t,x}}\int_0^t\|w(s)\|_2^2\le0$.
As usual one concludes $w^\epsilon=0$ by Gronwall inequality.

\section{Proof of the main result}
\label{sec:main}

The proof of Theorem~\ref{th:main} relies on construction of weak solutions made in~\cite{Rus12},\cite{Tem68},
on Proposition~\ref{prop:l1l3} and Theorem~\ref{prop:pro-lin}.

\begin{proof}[Proof of Theorem~\ref{th:main}]

Let $2\le n\le 4$ and consider a weak solution $u^\epsilon$ 
to~\eqref{eq:comp-appr} satisfying the strong energy inequality~\eqref{strong:ei}, arising from
the divergence-free vector field $u_0\in L^2(\R^n)$.
Then, for almost all $t_0$, we have $u^\epsilon(t_0)\in H^1(\R^n)\subset L^n(\R^n)$, and we can find
$t_0>0$ such that the energy inequality~\eqref{strong:ei} holds with $s=t_0$ and 
$\|u(t_0)\|_n$ satisfies the smallness assumption mentioned in~Remark~\ref{rem:ln}.
By this remark, there is a strong solution $v^\epsilon\in C([t_0,\infty),L^n(\R^n))$.
As observed in Remark~\ref{rem:wsu}, $u^\epsilon=v^\epsilon$ on $[t_0,\infty)$. So $u^\epsilon$ becomes a strong solution after some time, satisfying for $t\ge t_0$ the conditions of Part~(1) of Proposition~\ref{prop:l1l3}.
But $u^\epsilon$ is also known to satisfy the integral equation~\eqref{abstr:eq},
just like weak solutions of the Navier--Stokes equations satisfying the energy inequality do solve the
corresponding integral equations (we refer to~\cite{Dub02} for a proof this clam in the more general
setting of $L^2_{\rm uloc}$-solutions).
Then the bilinear term $B(u^\epsilon,u^\epsilon)(t)$ in~\eqref{abstr:eq}
belongs to $L^1(\R^n)$.
Of course, under the more stringent condition $u_0\in L^1\cap L^2(\R^n)$ also the linear term $M_\epsilon(t)u_0$ 
in~\eqref{abstr:eq} belongs to $L^1(\R^n)$.
Hence, when $2\le n\le 4$, under the assumptions of Theorem~\ref{th:main}, 
the weak solution considered above, after some time $t_0>0$, satisfies in fact also the 
conditions of Part~(2) of Proposition~\ref{prop:l1l3}.
Namely
\begin{equation}
\label{decay-st}
\begin{split}
&\|u^\epsilon(t)\|_q \le C_q t^{-(n/2)(1-1/q)},\\
&\|\nabla u^\epsilon(t)\|_q \le C_q t^{-1/2-(n/2)(1-1/q)},
\end{split} \qquad t\ge t_0,\quad 1\le q\le\infty.
\end{equation} 

It follows from Schwarz inequality that 
$|u^\epsilon|\,|\nabla u^\epsilon|\in L^1(\R^n\times \R^+)$.
In particular, we can define
\begin{equation}
 \label{eq:lambda-u}
 \vec\lambda_\epsilon\equiv
  \int_{0}^\infty\!\!\!\int \Bigl( u^\epsilon\cdot \nabla u^\epsilon +  \textstyle \frac12 u^\epsilon \div (u^\epsilon)\Bigr)\dd y\dd s.
\end{equation}

Integrating with respect to the space variable equation~\eqref{abstr:eq}
and recalling that $\int M_\epsilon(t)u_0\dd x=0$ (see~\eqref{vanin}), we deduce
\[
\int u^\epsilon(x,t)\dd x=\int_0^t\!\!\!\int \Bigl( u^\epsilon\cdot \nabla u^\epsilon +  \textstyle \frac12 u^\epsilon \div (u^\epsilon)\Bigr)\dd y\dd s,
\]
and so the limit $\lim_{t\to+\infty}\int u^\epsilon(x,t)\dd x$ does exist. Then $\vec\lambda_\epsilon$
is also given by
\begin{equation}
\label{lamin}
 \vec\lambda_\epsilon=\lim_{t\to+\infty}\int u^\epsilon(x,t)\dd x.
\end{equation}
This is in agreement with~\eqref{liminte}.

Consider again the integral formulation~\eqref{abstr:eq}, and let us
start with the study of the asymptotics of the linear part.
As already observed, $M_\epsilon(t)$ boils down to the standard heat kernel when applied to divergence-free vector fields and $\int u_0=0$, by the divergence-free condition and the integrability of $u_0$.
Hence,
\[ 
M_\epsilon(t)u_0(x)=e^{t\Delta}u_0(x)=\int [E(x-y,t)-E(x,t)]u_0(y)\dd y.
\]
Then, as $t\to+\infty$,
\[
\| M_\epsilon(t)u_0(x)\|_q 
\le t^{-\frac n2(1-\frac1q)}\int \textstyle\|E(\cdot-\frac{y}{\sqrt t},1)- E(\cdot,1)\|_q \, |u_0(y)| \dd y
 =o(t^{-\frac n2(1-\frac1q)}).
\]
In the last equality we applied the continuity of the $L^q$-norm under translations (when $1\le q<\infty$)
or the uniform continuity of $E(\cdot,t)$ (when $q=\infty$), and the dominated convergence theorem 
to prove that the last integral goes to zero as $t\to\infty$.
The nonlinear part in~\eqref{abstr:eq} can be written as
\[
 \Phi(\cdot,t)
 =-\int_0^t\!\!\!\int  M_\epsilon(x-y,t-s)f(y,s)\dd x\dd s,
\]
where $f(s)=u^\epsilon\cdot \nabla u^\epsilon +  \textstyle \frac12 u^\epsilon \div (u^\epsilon)$.

Next step will consist in applying Theorem~\ref{prop:pro-lin} with this choice of~$f$ and
the kernel $M=M_\epsilon$. Let us check the validity of the assumptions of our proposition.
We already observed in Section~\ref{sec:uit} that $M_\epsilon(\cdot,t)$ is in $L^1(\R^n)$.
Moreover, applying the decay estimates~\eqref{decay-st} 
$f\in L^1(\R^n\times \R^+)$ and
$t \|f(t)\|_1\in L^\infty(\R^+)$.
Furthermore
combining estimates~\eqref{decay-st} with H\"older inequality we get, for $1\le\beta\le \infty$,
\[
\|f(t)\|_\beta=\|u^\epsilon(t)\|_{2\beta}\|\nabla u^\epsilon(t)\|_{2\beta}
 \le C_\epsilon t^{-n+\frac{n}{2\beta}-\frac12}=\mathcal{O}(t^{-(1+\frac n2(1-\frac1\beta))})
\qquad\text{as $t\to+\infty$}.
\]
Therefore, for all $1\le q\le\infty$, both Part~(1) and Part~(2) of Theorem~\ref{prop:pro-lin} do apply.

Hence, recalling expression~\eqref{eq:lambda-u},
\begin{equation}
\label{pro:Phi-u}
 \biggl\| \Phi(\cdot,t) + M_\epsilon(\cdot,t)\vec\lambda_\epsilon  \,\biggr\|_q= o(t^{-\frac n2(1-\frac1q)}), 
  \qquad \text{with}\quad
      1\le q\le\infty.
 \end{equation}
Combining the integral formula~\eqref{abstr:eq} with the above results we get the asymptotics
\[
\biggl\|u^\epsilon(t)+M_\epsilon(\cdot,t)\vec\lambda_\epsilon  \,\biggr\|_q= o(t^{-\frac n2(1-\frac1q)}),
 \qquad
      1\le q\le\infty.
\]
On the other hand, from~\eqref{ideM1}.
\[
M_\epsilon(\cdot,t)\vec\lambda_\epsilon=e^{t\Delta}\vec\lambda_\epsilon=E(\cdot,t)\vec\lambda_\epsilon.
\]
and we readily obtain profile~\eqref{eq:pro-u}.
To deduce from~\eqref{eq:pro-u}
the upper and lower bounds~\eqref{eq:ul-bounds-u} we just write
\[ 
\|u^\epsilon(t)\|_q=\|E(\cdot,t)\vec\lambda_\epsilon\|_q\, t^{-n/2(1-1/q)} +o(t^{-n/2(1-1/q)})
 \qquad\text{as $t\to+\infty$}.
\]
and observe that there exist two constants 
$c_\epsilon,c'_\epsilon>0$ such that, for all $1\le q\le \infty$,
\[ c'_\epsilon|\vec\lambda_\epsilon|\, t^{-n/2(1-1/q)} 
\le\|E(\cdot,t)\vec\lambda_\epsilon\|_q
\le c'_\epsilon|\vec\lambda_\epsilon|\, t^{-n/2(1-1/q)}.\]
Estimates~\eqref{eq:ul-bounds-u} now follows.

The asymptotic profile~\eqref{eq:pro-u} and its corollary~\eqref{eq:ul-bounds-u} remain valid
without restrictions on the spatial dimension, provided $u_0\in L^1\cap L^n(\R^n)$,
with $\|u_0\|_{\dot B^{-1+n/p}_{p,\infty}}$
small enough for some $n<p<2n$. Indeed, in this case we can directly apply 
Proposition~\eqref{prop:l1l3} and obtain the validity of estimates~\eqref{decay-st} for all $t>0$
(in fact, $u^\epsilon\cap \mathcal{Y}_1\cap \mathcal{Y}_\infty$ that provides a better control of
$u^\epsilon$ near $t=0$ than~\eqref{decay-st}) and argue as above.
 
\end{proof}

Let us mention that in Theorem~\ref{th:main} it would be possible to replace the condition $u_0\in L^1(\R^n)$ (that implies $\int u_0=0$ because $u_0$ is divergence-free) by a more general 
condition prescribing a fast $L^q$-decay of the linear part of the equation.
For example, by the condition 
\begin{equation}
\label{eq:fadec}
\lim_{t\to+\infty}t^{(n/2)(1-1/q)}\|e^{t\Delta}u_0\|_q=0.
\end{equation}
Conclusions~\eqref{eq:pro-u}-\eqref{eq:ul-bounds-u} would remain valid.
However, formula~\eqref{liminte} does require the spatial integrability of the solution and cannot be used
in these more general situations. 
In the absence of the $L^1$ condition for $u_0$, formula~\eqref{eq:lambda-u} can be used
instead of~\eqref{liminte} to define $\vec\lambda_\epsilon$.

It could be also possible to replace the $L^1$-condition on~$u_0$ by a condition
involving the \emph{decay character} of $u_0$, namely, $r^*(u_0)>0$.
The decay character is very useful to get sharp algebraic decay estimates from below and above
for (linear or nonlinear) dissipative systems.
We do not recall here the precise definition of decay character of an $L^2$ function: the original definition~\cite{BjoS09,NicS15} has been slightly changed and improved in~\cite{Bra16},
in order to make the theory more complete and 
widely applicable. See also~\cite{FerNP17} for another very recent application of the decay character.

The case $r^*(u_0)<0$ is also of interest, but  this case
corresponds to solutions such that the linear part decays at slow rates in $L^2$:
as we pointed out in the introduction, in this case,
the analysis of~\cite{NicS15} already provides a satisfactory answer to the large time decay problem for equation~\eqref{eq:comp-appr}.

The case $r^*(u_0)=0$ corresponds to a borderline situation: this condition
ensures $ct^{-n/4}\le \|e^{t\Delta}u_0\|_2\le c't^{-n/4}$ for  $t>\!\!\!>1$ (with $c,c'>0)$.
As $u_0$ is divergence-free, condition $r^*(u_0)=0$ thus excludes 
$u_0\in L^1(\R^n)$ and excludes also~\eqref{eq:fadec} (but condition $r^*(u_0)=0$
is compatible, e.g., with $u_0\in \dot B^{-n/2}_{2,\infty}(\R^n)$). 
In this sitation, if $\vec\lambda_\epsilon=0$ then the nonlinear integral term decays
faster to zero than $\|e^{t\Delta}u_0\|_2$. So the linear part will govern the long time
behavior of the solution. But for $\vec\lambda_\epsilon\not=0$, both the linear and nonlinear terms have the same decay rates. In this borderline situation, solutions to~\eqref{eq:comp-appr} satisfy $\|u(t)\|_2=\mathcal{O}(t^{-n/4})$ but lower bounds for the $L^2$-decay of~$u$ are no longer available.

\section{Acknowledgements}

The author would like to thank the referees for their careful reading and helpful suggestions in improving the first version of the manuscript.

%
%
%
%
%
%
%
%
%
%
%

\end{document}